\theoremstyle{plain}
\newtheorem{theorem}{Theorem}[section]
\newtheorem{corollary}[theorem]{Corollary}
\newtheorem{lemma}[theorem]{Lemma}
\newtheorem{definition}[theorem]{Definition}
\def\dfrac{\displaystyle\frac}
\begin{document}

\vskip 0.5cm

\title[asymptotically tracially  approximation of ${\rm C^*}$-algebras]{{\bf  some properties   for  asymptotically tracially  approximation of ${\rm C^*}$-algebras}}
\author{Qingzhai Fan and Yutong Wu}
\address{ Qingzhai Fan\\ Department of Mathematics\\ Shanghai Maritime University\\
Shanghai\\China
\\  201306 }
\email{qzfan@shmtu.edu.cn}

\address{Yutong  Wu\\ Department of Mathematics\\ Shanghai Maritime University\\
Shanghai\\China
\\  201306 }
\email{wyt8524562023@163.com}

\thanks{{\bf Key words} ${\rm C^*}$-algebras, asymptotically tracially  approximation,  Cuntz semigroup.}

\thanks{2000 \emph{Mathematics Subject Classification.}46L35, 46L05, 46L80}

\begin{abstract} Let $\Omega$ be a class of unital $\rm C^{*}$-algebras.  The class
 of ${\rm C^*}$-algebras which are asymptotical   tracially  in $\Omega$, denoted  by ${\rm AT}\Omega$.
  In  this paper, we will show that  the following  class of
 ${\rm C^*}$-algebras in the class $\Omega$ are inherited by simple unital ${\rm C^*}$-algebras in the
 class  $\rm AT\Omega$
  $(1)$ the class of real rank zero ${\rm C^*}$-algebras,
   $(2)$ the class of ${\rm C^*}$-algebras with the radius of comparison $n$, and $(3)$ the class of $\rm AT\Omega$.
As an application, let $\Omega$ be a class of unital  ${\rm C^*}$-algebras  which  have  generalized tracial rank at most one (or has tracial topological rank zero, or has tracial topological rank one). Let $A$ be a unital separable  simple ${\rm C^*}$-algebra such that  $A\in \rm AT\Omega$, then $A$ has generalized tracial rank at most one (or has tracial topological rank zero, or has tracial topological rank one).
 \end{abstract}
\maketitle

\section{Introduction}

 The Elliott program for the classification of amenable
 ${\rm C^*}$-algebras might be said to have begun with the ${\rm K}$-theoretical
 classification of AF algebras in \cite{E1}. Since then, many classes of
 ${\rm C^*}$-algebras have been classified by the Elliott invariant.
    A major next step was the classification of simple
 $\rm AH$ algebras without dimension growth (in the real rank zero case see \cite{E6}, and in the general case
 see \cite{GL}).
 A crucial intermediate step was Lin's axiomatization of  Elliott-Gong's decomposition theorem for  simple $\rm AH$ algebras of real rank zero (classified by Elliott-Gong in \cite{E6}) and Gong's decomposition theorem (\cite{G1}) for simple $\rm AH$ algebras (classified by Elliott-Gong-Li in \cite{GL}). Heavily inspired by Gong's work in \cite{G1} and \cite{E6}, Lin introduced the concepts of   $\rm TAF$ and $\rm TAI$ (\cite{L0} and \cite{L1}). Instead of assuming inductive limit structure, Lin started with a certain abstract (tracial) approximation property.  This led eventually to the classification of  simple separable amenable stably finite  ${\rm C^*}$-algebras with finite nuclear dimension in the UCT class (see \cite{G4}, \cite{G5}, \cite{EZ5}, \cite{TWW1}).

   In the classification of  simple separable nuclear  ${\rm C^*}$-algebras,  it is  necessary to invoke some regularity property of the  ${\rm C^*}$-algebras. There are three regularity properties of particular interest: $\mathcal{Z}$-stability;  finite nuclear dimension; and certain comparison property of positive elements.
  Winter and Toms have conjectured that these three  properties are equivalent for all separable simple nuclear  ${\rm C^*}$-algebras (see \cite{ET}, \cite{TT1} and \cite{WW3}).

In order to be easier to  verify a  ${\rm C^*}$-algebra being $\mathcal{Z}$-stable; as well as
Hirshberg and Oroviz introduced tracial $\mathcal{Z}$-stability  in \cite{HO}, they  showed that a unital simple separable nuclear
  ${\rm C^*}$-algebra $A$  is   $\mathcal{Z}$-stable  if and only if  $A$ is tracially $\mathcal{Z}$-stable  in \cite{HO}.

Inspired by the work of Elliott,  Gong,  Lin, and  Niu in \cite {EGLN2}, \cite{GL2}, \cite{GL3}, and  by  the work of  Hirshberg and Oroviz's tracial  $\mathcal{Z}$-stability,
 in order to search a tracial version of Toms-Winter conjecture, Fu and Lin introduced  asymptotically tracially approximation of   ${\rm C^*}$-algebras in \cite{FL}, and also in \cite{FL}, Fu and Lin  introduced  some  concepts of  tracial nuclear dimension.  

In \cite{FL}, Fu and Lin show that the class of  stably finite ${\rm C^*}$-algebras; quasidiagonal ${\rm C^*}$-algebras;  purely infinite simple ${\rm C^*}$-algebras; stable rank one simple ${\rm C^*}$-algebras;  and the properties almost unperforated  are preserved to the  simple unital ${\rm C^*}$-algebras which are  asymptotically tracially in the same class.

In \cite{FF}, Fan and Fang show that the class of  certain   comparison  properties
${\rm C^*}$-algebras are preserved to the simple unital  ${\rm C^*}$-algebras which are  asymptotically
tracially in the same class.

In this paper, we will show that the following  result:

Let $\Omega$ be a class of unital $\rm C^{*}$-algebras.   The class
 of ${\rm C^*}$-algebras which are asymptotical   tracially  in $\Omega$, denoted  by ${\rm AT}\Omega$ (see Definition \ref{def:2.6}). Then $A\in\rm AT\Omega$ for any infinite-dimensional
		simple unital $\rm C^{*}$-algebra $A\in\rm AT(\rm AT\Omega)$ (see Definition \ref{def:2.6}).

As applications, we can get the following three results:

Let $\Omega$ be a class of unital  ${\rm C^*}$-algebras  which have tracial topological rank zero. Let $A$ be  a unital separable  simple ${\rm C^*}$-algebra such that  $A$ is asymptotically tracially in $\Omega$, then $A$ has tracial topological rank zero (see Definition \ref{def:2.3}).

Let $\Omega$ be a class of unital  ${\rm C^*}$-algebras  which have tracial topological rank one.  Let $A$ be  a unital separable  simple ${\rm C^*}$-algebra such that  $A$ is asymptotically tracially in $\Omega$, then $A$ has tracial topological rank one (see Definition \ref{def:2.3}).

Let $\Omega$ be a class of unital  ${\rm C^*}$-algebras  which  have  generalized tracial rank at most one.  Let $A$ be a unital separable  simple ${\rm C^*}$-algebra  such that $A$ is asymptotically tracially in $\Omega$, then $A$ has generalized tracial rank at most one (see Definition \ref{def:2.4}).

We also show that the following two main results:

Let $\Omega$ be a class of unital  ${\rm C^*}$-algebras  which have real rank zero.  Let $A$ be  a unital separable  simple ${\rm C^*}$-algebra  such that $A$ is asymptotically tracially in $\Omega$, then $A$ has real rank zero.

Let $\Omega$ be a class of stably finite unital
${\rm C^*}$-algebras which have $(n, m)$ comparison (see Definition \ref{def:2.1}). Let $A$ be a unital separable simple ${\rm C^*}$-algebra. If  $A$ is asymptotically tracially in $\Omega$, then $A$  has   $(n, m)$ comparison (see Definition \ref{def:2.1}).

As an application, let $\Omega$ be a class of stably finite unital
${\rm C^*}$-algebras such that ${\rm rc}(B)=n$ (see Definition \ref{def:2.1}) for any $B\in \Omega$. Let $A$ be a unital separable simple ${\rm C^*}$-algebra. If  $A$ is asymptotically tracially in $\Omega$, then ${\rm rc}(A)\leq n$ (see Definition \ref{def:2.1}).

 \section{Definitions and preliminaries}

Let $A$ be a  ${\rm C^*}$-algebra. Given  two positive elements $a, b \in A$, we call that  $a$ is Cuntz subequivalent to $b$, and write $a\precsim b$,  if there exist
$(s_n)_{n=1}^\infty$ in $A$, such that
$$\lim_{n\to \infty}\|s_nbs_n^*-a\|=0.$$

We call  that $a$ and $b$ are Cuntz equivalent (written $a\sim b$),
 if $a\precsim b$ and $b\precsim a$. We write $\langle a\rangle$ for the equivalence class of $a$.
(Cuntz equivalent for positive elements of ${\rm C^*}$-algebra was first introduced by  Cuntz in  \cite{CJ}. More recently results see \cite{APT}, \cite{APRT} and \cite{CEI}).

Given  a ${\rm C^*}$-algebra $A$, we denote  ${\rm M}_{\infty}(A)_+=\bigcup_{n\in {\mathbb{N}}}{\rm M}_n(A)_+$, and for $a\in {\rm M}_n(A)_+$ and $b\in{\rm M}_m(A)_+$, denote $a\oplus b:={\rm diag}(a, b)\in {\rm M}_{n+m}(A)_+$.

 Given $a, b\in {\rm M}_{\infty}(A)_+$, then there exist integer $n, m$, such that  $a\in {\rm M}_n(A)_+$ and $b\in{\rm M}_m(A)_+$. We call  that $a$ is Cuntz subequivalent to $b$ and write $a\precsim b$ if $a\oplus 0_{\max({(m-n)},0)}\precsim b\oplus 0_{\max({(n-m)},0)}$ as elements in $ {\rm M}_{\max{(n,m)}}(A)_+$.

Given  a positive element  $a$ in $A$,  and $\varepsilon>0,$
 we denote by $(a-\varepsilon)_+$ the element in $A$  via the functional calculus to the
 function $f(t)={\max (0, t-\varepsilon)},$  $t\in \sigma(a)$.

 Let $A$ be a stably finite unital ${\rm C^*}$-algebra. Recall that a positive element $a\in A$ is called purely positive if $a$ is
not Cuntz-equivalent to a projection. This is equivalent to saying that $0$ is
an accumulation point of $\sigma(a)$ (Recall that $\sigma(a)$ denotes the spectrum of $a$).

\begin{definition}{\rm (\cite{TF}.)}\label{def:2.1} Let $A$ be a unital stably finite   ${\rm C^*}$-algebra. Define the radius of comparison for $A$ to be
 ${\rm rc}(A)=\inf \{m/n:a\lesssim b$ whenever $na\oplus m 1_A\lesssim nb\}$.
We say that ${\rm C^*}$-algebra $A$ has $(n, m)$  comparison  whenever $na\oplus m1_A\lesssim nb$ implies that $a\lesssim b$  for any $a, b\in {\rm M}_{\infty}(A)_+.$
\end{definition}
The following facts are  well known.
\begin{theorem}{\rm (\cite{PPT}, \cite{EFF}, \cite{HO}, \cite{P3}, \cite{RW}.)} \label{thm:2.2} Let $A$ be a ${\rm C^*}$-algebra.

 $(1)$ Let $a, b\in A_+$ and any  $\varepsilon>0$  be such that
$\|a-b\|<\varepsilon$.  Then  $(a-\varepsilon)_+\precsim b$.

$(2)$ Let $a, p$ be positive elements in ${\rm M}_{\infty}(A)$ with $p$ a projection. If $p\precsim a,$ and $p$ is not Cuntz equivalent to $a$. Then there is a non-zero positive element $b$ in $A$ such that  $bp=0$ and $b+p\sim a$.

 $(3)$ Let $a$ be a purely  positive element  of $A$.
      Let  $\delta>0$. Then there exist a non-zero positive element $d\in A$ such that $ad=0$ and  $(a-\delta)_++d\precsim a.$

$(4)$ Let $a, b\in A$ satisfy $0\leq a\leq b$. Let $\varepsilon\geq 0$.
Then $(a-\varepsilon)_+\precsim(b-\varepsilon)_+$.
\end{theorem}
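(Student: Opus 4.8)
All four items are standard facts about Cuntz comparison, and I would organize them around item (4), which does the real work: prove (4) first, derive (1) from it, handle (3) by a direct functional-calculus construction, and treat (2) as the one genuinely substantive point.

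For (4), start with $\varepsilon=0$: given $0\le a\le b$, set $d_n=a^{1/2}(b+\tfrac1n)^{-1/2}\in A$ (computed in $\widetilde A$). A short calculation gives $a-d_nbd_n^{*}=\tfrac1n\,a^{1/2}(b+\tfrac1n)^{-1}a^{1/2}$, and since $a\le b$ forces $(b+\tfrac1n)^{-1}\le(a+\tfrac1n)^{-1}$, this is dominated by $\tfrac1n\,a^{1/2}(a+\tfrac1n)^{-1}a^{1/2}\le\tfrac1n\,1$; hence $\|d_nbd_n^{*}-a\|\le\tfrac1n\to0$ and $a\precsim b$. The general $\varepsilon>0$ case is Rørdam's refinement of this: it follows by the standard functional-calculus reduction to the $\varepsilon=0$ case, and I would cite \cite{RW}, \cite{P3} for the details. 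Granting (4), item (1) is immediate: if $\|a-b\|<\varepsilon$, put $\varepsilon'=\|a-b\|$, so $a\le b+\varepsilon'1$ in $\widetilde A$; applying (4) gives $(a-\varepsilon)_+\precsim(b+\varepsilon'1-\varepsilon)_+=(b-(\varepsilon-\varepsilon')1)_+$, and $(b-\eta)_+\precsim b$ for every $\eta>0$ together with the fact that Cuntz subequivalence of elements of $A$ is detected already in $A$ (the implementing elements may be taken in $A$) completes it.

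For (3): purely positivity of $a$ means $0$ is an accumulation point of $\sigma(a)$, so $\sigma(a)\cap(0,\delta)\ne\emptyset$; choose a nonzero continuous $f\ge0$ on $\sigma(a)$ supported inside $(0,\delta)$ and set $d:=f(a)\ne0$. Then $d$ is orthogonal to $(a-\delta)_+$ (their functional-calculus supports are disjoint), and $(a-\delta)_++d=g(a)$ with $g(t)=(t-\delta)_++f(t)$ supported in $\sigma(a)\setminus\{0\}$; since $\sigma(a)\setminus\{0\}$ is dense in $\sigma(a)$, this yields $(a-\delta)_++d\precsim a$. (If one wants $ad=0$ on the nose rather than $(a-\delta)_+d=0$, replace $a$ by $a\oplus 0$, which changes nothing for Cuntz comparison.)

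For (2): since $p$ is a projection, $p\precsim a$ can be upgraded to $p\precsim(a-\delta)_+$ for some $\delta>0$ (approximate $p$ to within $\tfrac12$ by a cut-down of some $xax^{*}$ and exploit that the relevant spectrum sits near $\{0,1\}$). Inside the hereditary subalgebra $\overline{(a-\delta)_+A(a-\delta)_+}\subseteq\overline{aAa}$ one finds a projection $q$ there Murray--von Neumann equivalent to $p$, and then $a\sim q\oplus a'$ for a complementary positive $a'$, with $a'\ne0$ precisely because $p\not\sim a$. This splitting---that a projection Cuntz-below $a$ sits, up to Cuntz equivalence, as an orthogonal direct summand of $a$---is the technical heart of the theorem and the only place where more than bookkeeping is needed; I would carry it out via the usual partial-isometry/polar-decomposition argument in $A^{**}$ (pushing back into $A$ using $q\in\overline{aAa}$), or else simply invoke \cite{P3}, \cite{RW}. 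Everything else being routine, for a statement this classical I would in any case be content to cite \cite{PPT}, \cite{EFF}, \cite{HO}, \cite{P3}, \cite{RW} for whatever is not reproved above.
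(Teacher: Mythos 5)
The paper offers no proof of Theorem \ref{thm:2.2} at all: it is quoted as a list of well-known facts and delegated to \cite{PPT}, \cite{EFF}, \cite{HO}, \cite{P3}, \cite{RW}. Your proposal reconstructs the standard arguments from exactly those sources, and the parts you prove in full are correct: the $\varepsilon=0$ computation with $d_n=a^{1/2}(b+\tfrac1n)^{-1/2}$ is the classical one; the deduction of (1) from (4) via $a\le b+\|a-b\|\cdot 1_{\widetilde A}$ works (and flagging that subequivalence computed in $\widetilde A$ descends to $A$ is the right point to make explicit); and for (2) your route (upgrade $p\precsim a$ to $p\precsim(a-\delta)_+$, find a Murray--von Neumann equivalent projection $q$ in $\overline{(a-\delta)_+A(a-\delta)_+}$, split $q$ off from $a$) is how the cited references proceed. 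For the splitting you defer to $A^{**}$ or to \cite{P3}, \cite{RW}; a cleaner finish is to note that $h=q+(1-q)a(1-q)$ is a strictly positive element of $\overline{aAa}$ (a positive functional annihilating $h$ annihilates $q$ and $(1-q)a(1-q)$, hence annihilates $a$), so $h\sim a$, while $(1-q)a(1-q)\neq 0$ because otherwise $a\sim q\sim p$, contradicting $p\not\sim a$.

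Two caveats. First, for (4) with $\varepsilon>0$ your parenthetical is inaccurate: $t\mapsto(t-\varepsilon)_+$ is not operator monotone, so the general case is not a ``functional-calculus reduction'' to the $\varepsilon=0$ computation; the standard proofs (Kirchberg--R{\o}rdam, reproduced in \cite{P3}) run a separate approximation argument. Since you cite those sources -- as the paper itself does for the whole theorem -- this is a presentational slip rather than a gap, but do not present the $\varepsilon>0$ case as a formal consequence of the $\varepsilon=0$ one. Second, concerning (3): you correctly noticed that your $d=f(a)$ satisfies $(a-\delta)_+d=0$ but not $ad=0$. In fact the statement as printed, with $d\in A$ and $ad=0$, is false: in $A=C([0,1])$ with $a(t)=t$, a purely positive element, the only $d$ with $ad=0$ is $d=0$ (and simple examples exist too, e.g.\ a full-support positive element with spectrum $[0,1]$ in a UHF algebra). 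The correct classical lemma -- and all that the paper actually uses later in Theorem \ref{thm:3.6}, where orthogonality enters only through $\langle(a-\delta)_+\rangle+\langle d\rangle=\langle(a-\delta)_++d\rangle$ -- asks that $d$ be orthogonal to $(a-\delta)_+$, which is exactly what your construction gives. Your proposed repair by passing to $a\oplus 0$ does not rescue the literal statement (it produces $d$ orthogonal to $a\oplus 0$ inside ${\rm M}_2(A)$, not an element of $A$ orthogonal to $a$), so the honest conclusion is that the hypothesis should be read, and corrected, as $(a-\delta)_+d=0$, with your proof then complete.
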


 We denote by $\mathcal{I}^{(0)}$ the class of finite dimensional ${\rm C^*}$-algebra, and denote by $\mathcal{I}^{(k)}$ the class of all unital ${\rm C^*}$-algebras which are untial hereditary ${\rm C^*}$-subalgebras of  ${\rm C^*}$-algebras of the form $C(X)\otimes F$ were $X$ is  a $k$-dimensional finite ${\rm CW}$ complex and $F\in \mathcal{I}^{(0)}$.

\begin{definition} {\rm (\cite{L0}, \cite{L1}, \cite{L2}.)}\label{def:2.3} A  unital simple  ${\rm C^*}$-algebra $A$ is said to
     have tracial topological rank no more than $k$ if,  for any
 $\varepsilon>0$, any finite
subset $F\subseteq A$, and any  non-zero element $a\geq 0$, there
exist a non-zero projection $p\in A$,   a ${\rm C^*}$-subalgebra $B\in \mathcal{I}^{(k)}$  with
$1_B=p$ such that

$(1)$ $\|px-xp\|<\varepsilon$ for any $x\in F$,

$(1)$  $pxp\in_{\varepsilon} B$ for all $x\in  F$, and

$(2)$ $1-p\precsim a$, or equivalently, $1-p$ is equivalent to a projection in ${\rm Her}(a)$.

\end{definition}

\begin{definition} {\rm (\cite{G4}.)}\label{def:2.4} Let $A$ be a unital simple  ${\rm C^*}$-algebra.   We say $A$ has the generalized tracial rank at most one, if the following hold:
Let $\varepsilon>0$, let $a\in A_+\backslash\{0\}$ and let $F\subseteq A$ be a finite subset. There exists a nonzero projection $p\in A$ and  ${\rm C^*}$-subalgebra $B$ which is a subhomogeneous  ${\rm C^*}$-algebra with one dimensional spectrum, or $B$ is finite dimensional  ${\rm C^*}$-algebra with $1_B=p$ such that

$(1)$ $\|px-xp\|<\varepsilon$ for any $x\in F$,

$(1)$  $pxp\in_{\varepsilon} B$ for all $x\in  F$, and

$(2)$ $1-p\precsim a$.

\end{definition}

In this case, we write $gTR(A)\leq 1$.

\begin{definition} \label{def:2.5}Given two ${\rm C^*}$-algebras  $A$ and $B$, let   $\varphi:A\to B$
be a map, let $\mathcal{G}\subset A$, and let $\varepsilon>0$. The map $\varphi$ is called $\mathcal{G}$-$\varepsilon$-multiplicative, or called $\varepsilon$-multiplicative on $\mathcal{G}$, if
for any $a, b\in \mathcal{G}$, $\|\varphi(ab)-\varphi(a)\varphi(b)\|<\varepsilon$. If, in addition, for any
$a\in \mathcal{G}$, $|\|\varphi(a)\|-\|a\||<\varepsilon$, then we say $\varphi$ is a $\mathcal{G}$-$\varepsilon$-approximate embedding.
\end{definition}

Let $A$ and $B$ be two ${\rm C^*}$-algebras. We say a map $\phi:A\to B$ is a c.p.c map when  $\phi$ is a completely positive contraction linear map. We call a linear map $\psi:A\to B$ is an
order zero map which means  preserving  orthogonality, i.e.,
 $\psi(e)\psi(f)=0$ for all $e, f\in A$ with $ef=0$,

	Let  $\Omega$ be a class of unital ${\rm C^*}$-algebras.  Then  the class
 of ${\rm C^*}$-algebras which are asymptotical   tracially   in $\Omega$, denoted  by ${\rm AT}\Omega$ was introduced by Fu and Lin in \cite{FL}.

 The following  definition is  Definition 3.1 in \cite{FL}.

\begin{definition}{\rm (\cite{FL}.)}\label{def:2.6}  Let $\Omega$ be  a class of ${\rm C^*}$-algebra. We call that  a unital  ${\rm C^*}$-algebra $A$ is  asymptotically tracially
in $\Omega$,  if  for any finite
subset $\mathcal{F}\subseteq A,$ any
 $\varepsilon>0,$   and any  non-zero  positive element $a,$ there are
a ${\rm C^*}$-algebra $B\in \Omega$,  and  c.p.c maps  $\alpha:A\to B$ and  $\beta_n: B\to A$, and $\gamma_n:A\to A$ such that

$(1)$ $\|x-\gamma_n(x)-\beta_n\alpha(x)\|<\varepsilon$ for all $x\in \mathcal{F}$, and for all $n\in {\mathbb{N}}$,

$(2)$ $\alpha$ is an $\mathcal{F}$-$\varepsilon$ approximate embedding,

$(3)$ $\lim_{n\to \infty}\|\beta_n(xy)-\beta_n(x)\beta_n(y)\|=0$, and $\lim_{n\to \infty}\|\beta_n(x)\|=\|x\|$ for all $x, y\in B$, and

$(4)$ $\gamma_n(1_A)\precsim a$ for all $n\in \mathbb{N}$.
\end{definition}

\textbf{Note that:} Let  $\mathcal{I}^{(0)}$ be the class of finite dimensional ${\rm C^*}$-algebra and let $P_1$ be the class of ${\rm C^*}$-algebra of 1-dimensional $\rm NCCW$ complexes respectively. Since $\mathcal{I}^{(0)}$ and $P_1$  are semiprojective. One easily verifies that $A$ is asymptotical tracially in  $\mathcal{I}^{(0)}$ is equivalent to that $A$ has tracial topological rank zero,  and $A$ is asymptotical tracially in  $P_1$ is equivalent to that $A$ has generalized tracial rank one. Also  $A$ is asymptotical tracially in  $\mathcal{I}^{(1)}$ is equivalent to that $A$ has tracial topological rank one (see \cite{FL}).

The following theorem is Proposition 3.8 in \cite{FL}.
\begin{theorem}(\cite{FL}.)\label{thm:2.7}  Let $\Omega$ be a class of ${\rm C^*}$-algebras. Let $A$ be a simple unital
${\rm C^*}$-algebra which is asymptotically tracially in  $\Omega$. Then the following conditions holds: For any finite
subset $\mathcal{F}\subseteq A,$ any
 $\varepsilon>0,$  and any  non-zero positive element $a,$ there are
a ${\rm C^*}$-algebra $B$  in $\Omega$ and c.p.c maps  $\alpha:A\to B$, and  $\beta_n: B\to A$, and $\gamma_n:A\to A\cap\beta_n(B)^{\perp}$ such that

$(1)$ the map $\alpha$ is unital  completely positive   linear map,  $\beta_n(1_B)$ and $\gamma_n(1_A)$ are projections and  $\beta_n(1_B)+\gamma_n(1_A)=1_A$ for all $n\in \mathbb{N}$,

$(2)$ $\|x-\gamma_n(x)-\beta_n\alpha(x)\|<\varepsilon$ for all $x\in \mathcal{F}$, and for all $n\in {\mathbb{N}}$,

$(3)$ $\alpha$ is an $\mathcal{F}$-$\varepsilon$-approximate embedding,

$(4)$ $\lim_{n\to \infty}\|\beta_n(xy)-\beta_n(x)\beta_n(y)\|=0$, and $\lim_{n\to \infty}\|\beta_n(x)\|=\|x\|$ for all $x, y\in B$, and

$(5)$ $\gamma_n(1_A)\precsim a$ for all $n\in \mathbb{N}$.
\end{theorem}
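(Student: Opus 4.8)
The strategy is to derive this strengthening of Definition \ref{def:2.6} by successively cleaning up the maps that definition provides. Fix a finite set $\mathcal{F}\subseteq A$, a number $\varepsilon>0$, and a nonzero positive $a\in A$. I would first reduce to the case in which $\mathcal{F}$ consists of self-adjoint contractions with $1_A\in\mathcal{F}$, enlarge $\mathcal{F}$ to a finite set $\mathcal{F}_0$ that in addition contains all products of pairs from $\mathcal{F}$, fix a tolerance $\varepsilon_0\ll\varepsilon$, and apply Definition \ref{def:2.6} to $(\mathcal{F}_0,\varepsilon_0,a)$ to obtain $B_0\in\Omega$ and c.p.c.\ maps $\alpha_0\colon A\to B_0$, $\beta_n^{(0)}\colon B_0\to A$, $\gamma_n^{(0)}\colon A\to A$.

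\emph{Unitalizing $\alpha$.} Since $\alpha_0$ is $\mathcal{F}_0$-$\varepsilon_0$-multiplicative and $1_A\in\mathcal{F}_0$, the positive contraction $\alpha_0(1_A)$ lies within $O(\sqrt{\varepsilon_0})$ of its spectral projection $q:=\chi_{[1/2,1]}(\alpha_0(1_A))$, and $q\neq0$ because $\|\alpha_0(1_A)\|>1-\varepsilon_0$. I would pass to the unital corner $B:=qB_0q$ --- which again belongs to $\Omega$ for the classes occurring in the applications (finite-dimensional algebras, the classes $\mathcal{I}^{(k)}$, subhomogeneous algebras with at most one-dimensional spectrum), while for general $\Omega$ one invokes the corresponding reduction of \cite{FL} --- and put $\alpha(x):=q\alpha_0(x)q+\rho(x)\bigl(q-q\alpha_0(1_A)q\bigr)$ for a fixed state $\rho$ on $A$. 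This is unital completely positive with $\alpha(1_A)=q=1_B$, and $\|\alpha(x)-\alpha_0(x)\|=O(\sqrt{\varepsilon_0})$ on $\mathcal{F}_0$; restricting $\beta_n^{(0)}$ to $B$ gives c.p.c.\ maps $\beta_n$. The pair $(\alpha,\beta_n)$ still satisfies the analogues of Definition \ref{def:2.6}(1)--(4) up to $O(\sqrt{\varepsilon_0})$, and crucially $\beta_n\alpha(1_A)=\beta_n(1_B)$ now.

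\emph{The projection $p_n$ and the final maps.} Set $e_n:=\beta_n(1_B)$. Asymptotic multiplicativity of $\beta_n$ gives $\|e_n^2-e_n\|\to0$, so for large $n$ the spectrum of $e_n$ is concentrated near $\{0,1\}$ and $p_n:=\chi_{[1/2,1]}(e_n)$ is a projection with $\|e_n-p_n\|\to0$. The approximate factorization of $1_A$ gives $\gamma_n^{(0)}(1_A)\approx_{\varepsilon_0}1_A-\beta_n\alpha(1_A)=1_A-e_n\approx1_A-p_n$, hence $\|p_n\gamma_n^{(0)}(1_A)p_n\|=O(\varepsilon_0)+o(1)$; the Kadison--Schwarz inequality for the $2$-positive contraction $\gamma_n^{(0)}$ then yields $\|p_n\gamma_n^{(0)}(x)\|^2\le\|x\|^2\|p_n\gamma_n^{(0)}(1_A)p_n\|$, so $p_n\gamma_n^{(0)}(x)\to0$ and $\gamma_n^{(0)}(x)p_n\to0$ uniformly on bounded sets, and also $1_A-p_n\precsim\gamma_n^{(0)}(1_A)\precsim a$ by Theorem \ref{thm:2.2}(1). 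Now define $\gamma_n(x):=(1-p_n)x(1-p_n)$ and $\beta_n'(b):=p_n\beta_n(b)p_n+\sigma(b)\bigl(p_n-p_ne_np_n\bigr)$ for a fixed state $\sigma$ on $B$. Then $\gamma_n$ is c.p.c.\ with range in $(1-p_n)A(1-p_n)\subseteq A\cap\beta_n'(B)^{\perp}$; $\beta_n'$ is completely positive with $\beta_n'(1_B)=p_n$, hence contractive into $p_nAp_n$; the elements $\beta_n'(1_B)=p_n$ and $\gamma_n(1_A)=1-p_n$ are complementary projections; and $\gamma_n(1_A)\precsim a$. Property (4) for $\beta_n'$ is inherited from $\beta_n$ via $\|e_n-p_n\|\to0$ and $\|p_n-p_ne_np_n\|\to0$, and for (2) one writes $x-\gamma_n(x)=p_nxp_n+p_nx(1-p_n)+(1-p_n)xp_n$ and combines the approximate factorization with $p_n\gamma_n^{(0)}(x)\to0$ and $p_n\beta_n\alpha(x)\approx\beta_n\alpha(x)$ to get $p_nx\approx\beta_n\alpha(x)\approx xp_n$, so the off-diagonal terms vanish asymptotically and $p_nxp_n\approx p_n\beta_n\alpha(x)p_n\approx\beta_n'\alpha(x)$; taking $\varepsilon_0$ small and $n$ large gives $\|x-\gamma_n(x)-\beta_n'\alpha(x)\|<\varepsilon$. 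Writing $\beta_n$ for $\beta_n'$, the maps $\alpha,\beta_n,\gamma_n$ are as required.

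I expect the two delicate points to be the unitalization and the extraction of orthogonality. Definition \ref{def:2.6} supplies neither a unital $\alpha$ nor any relation between $\beta_n(B)$ and $\gamma_n(A)$, so one is forced to pass to a corner $qB_0q$ --- the only place where the structure of $\Omega$, or the argument of \cite{FL}, is used --- and then to squeeze the asymptotic orthogonality out of the single approximate identity $\gamma_n(1_A)\approx1_A-\beta_n\alpha(1_A)$ by means of the Kadison--Schwarz inequality; the remaining work is routine $\varepsilon$-bookkeeping, with the only care needed being to keep the projection identities exact while all the other estimates are merely approximate.
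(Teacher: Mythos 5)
The paper does not actually prove Theorem \ref{thm:2.7}; it is quoted verbatim from \cite{FL} (Proposition 3.8 there), so your proposal has to be judged on its own terms rather than against an argument in the text. Your overall mechanism is the right one and most of it is sound: enlarging $\mathcal{F}$ to contain $1_A$ and products, perturbing the approximately idempotent elements $\alpha_0(1_A)$ and $e_n=\beta_n(1_B)$ to projections $q$ and $p_n$, using the Schwarz inequality for the c.p.c.\ map $\gamma_n^{(0)}$ together with $\gamma_n^{(0)}(1_A)\approx 1_A-p_n$ to kill $p_n\gamma_n^{(0)}(x)$, getting $1_A-p_n\precsim\gamma_n^{(0)}(1_A)\precsim a$ from Theorem \ref{thm:2.2}(1), and defining $\gamma_n(x)=(1-p_n)x(1-p_n)$, $\beta_n'(b)=p_n\beta_n(b)p_n+\sigma(b)(p_n-p_ne_np_n)$ so that $\beta_n'(1_B)=p_n$ exactly. (Two cosmetic points: the conclusion demands (1), (2), (5) for \emph{all} $n$, so you must discard the finitely many $n$ for which $p_n$ is undefined and re-index; and the distance from $\alpha_0(1_A)$ to $q$ is $O(\varepsilon_0)$, not $O(\sqrt{\varepsilon_0})$ — harmless.)

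The genuine gap is the unitalization step. You replace $B_0$ by the corner $B:=qB_0q$, but the theorem asserts $B\in\Omega$ for an \emph{arbitrary} class $\Omega$, and nothing in the statement gives closure of $\Omega$ under passing to unital hereditary subalgebras — in this paper that closure is an explicit \emph{hypothesis} (Lemma \ref{lem:2.8}), not a consequence; moreover the theorem is later applied with $\Omega$ replaced by classes such as ${\rm AT}\Omega$. Your fallback, ``invoke the corresponding reduction of \cite{FL}'', is circular, since Theorem \ref{thm:2.7} \emph{is} that reduction. Nor is this a removable technicality: the obvious alternative of unitalizing inside $B_0$ itself, e.g.\ $\alpha(x)=c\,q\alpha_0(x)q\,c+\rho(x)(1_{B_0}-q)$ with $c=(q\alpha_0(1_A)q)^{-1/2}$, keeps $B_0\in\Omega$ and preserves condition (3), but wrecks condition (2), because $\beta_n^{(0)}(1_{B_0}-q)$ has norm close to $\|1_{B_0}-q\|$ and the surplus $\rho(x)\beta_n^{(0)}(1_{B_0}-q)$ can neither be absorbed into $\gamma_n$ (it would violate $\gamma_n(1_A)\precsim a$) nor made small. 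So the passage to the corner — exactly the point you defer to \cite{FL} — is where the real content of the proposition sits, and as written your proof establishes the statement only for classes $\Omega$ closed under unital hereditary subalgebras, not in the stated generality.
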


\begin{lemma}{\rm (\cite{FL}.)}\label{lem:2.8} If the class $\Omega$ is closed under tensoring with matrix algebras and
 under passing to  unital
hereditary ${\rm C^*}$-subalgebras, then  the class which is asymptotically tracially in $\Omega$  is closed under
tensoring with matrix algebras  and under passing to unital hereditary ${\rm C^*}$-subalgebras.
\end{lemma}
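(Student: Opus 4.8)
The plan is to establish the two closure properties in parallel. In each case I would start from the defining data (a finite set, an $\ep>0$, and a non-zero positive element) for the algebra to be placed in ${\rm AT}\Omega$, transport it to $A$, invoke the hypothesis $A\in{\rm AT}\Omega$ in the strengthened form of Theorem \ref{thm:2.7} (so that $\alpha$ is unital, $\beta_n(1_B)$ and $\gamma_n(1_A)$ are complementary projections, $\beta_n(B)\subseteq\beta_n(1_B)A\beta_n(1_B)$, and $\gamma_n$ maps into $\gamma_n(1_A)A\gamma_n(1_A)$), and then transport the resulting $\Omega$-factorization back. The two hypotheses on $\Omega$ are used precisely to guarantee that the algebra $B\in\Omega$ produced for $A$ can be replaced by $B\otimes{\rm M}_k$, resp. by a unital hereditary ${\rm C^*}$-subalgebra of $B$, and still lie in $\Omega$.

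Turning to tensoring with ${\rm M}_k$: given finite $\mathcal F\subseteq A\otimes{\rm M}_k$, $\ep>0$, and $a\in(A\otimes{\rm M}_k)_+\setminus\{0\}$, I would write each $x\in\mathcal F$ as $\sum_{i,j}x_{ij}\otimes e_{ij}$ and let $\mathcal F_0\subseteq A$ collect all entries $x_{ij}$, their adjoints, all products $x_{il}y_{lj}$ of entries (so near-multiplicativity survives amplification), and $1_A$; and choose $\ep'$ small relative to $\ep/k^3$. The one non-formal input is the choice of positive element for the application to $A$: since $a\ne 0$ some diagonal block $a_{ii}$ is non-zero, so compressing by the projection $1_A\otimes e_{ii}$ and relabelling gives a non-zero positive $a'\in A$ with $a'\otimes e_{11}\precsim a$, and one then produces a non-zero positive $a_0$ with $a_0^{\oplus k}\precsim a'$, so that $a_0\otimes 1_{{\rm M}_k}\precsim a'\otimes e_{11}\precsim a$. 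Applying $A\in{\rm AT}\Omega$ to $(\mathcal F_0,\ep',a_0)$ yields $B\in\Omega$ and $\alpha_0,\beta_{0,n},\gamma_{0,n}$ with $\gamma_{0,n}(1_A)\precsim a_0$; I would set $B':=B\otimes{\rm M}_k\in\Omega$, $\alpha:=\alpha_0\otimes{\rm id}$, $\beta_n:=\beta_{0,n}\otimes{\rm id}$, $\gamma_n:=\gamma_{0,n}\otimes{\rm id}$. Then condition $(4)$ of Definition \ref{def:2.6} holds because $\gamma_n(1_{A\otimes{\rm M}_k})=\gamma_{0,n}(1_A)\otimes 1_{{\rm M}_k}\precsim a_0\otimes 1_{{\rm M}_k}\precsim a$; conditions $(1)$ and $(3)$ follow entrywise from those for $\alpha_0,\beta_{0,n},\gamma_{0,n}$ together with the crude bound $\|\sum c_{ij}\otimes e_{ij}\|\leq\sum_{ij}\|c_{ij}\|$ (for the norm statement in $(3)$ one uses that $(\beta_{0,n})$ is asymptotically multiplicative and isometric, hence asymptotically an injective $*$-homomorphism); and $(2)$, that $\alpha$ is an $\mathcal F$-$\ep$-approximate embedding, is the standard fact that an approximate embedding which is sufficiently multiplicative on a sufficiently large finite set stays an approximate embedding after tensoring with ${\rm id}_{{\rm M}_k}$, proved by passing through $\|\alpha(x)\|^{2}\approx\|\alpha(x^{*}x)\|\approx\|x^{*}x\|=\|x\|^{2}$ after enlarging $\mathcal F_0$ to contain the entries of $x^{*}x$.

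For passing to a unital hereditary ${\rm C^*}$-subalgebra $pAp$ with $p\in A$ a projection: given finite $\mathcal F\subseteq pAp$, $\ep>0$, $a\in(pAp)_+\setminus\{0\}$, I would apply $A\in{\rm AT}\Omega$ to the data consisting of $\mathcal F\cup\{p\}$ (enlarged by adjoints and products), a small $\ep'$, and $a$, obtaining $B\in\Omega$ and $\alpha_0,\beta_{0,n},\gamma_{0,n}$. Since $\alpha_0$ is nearly multiplicative on $p$, the element $\alpha_0(p)$ is a near-projection, so there is a genuine projection $p_B\in B$ with $\|p_B-\alpha_0(p)\|$ small, and $B'':=p_BBp_B\in\Omega$ by hypothesis. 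I would then put $\alpha:=p_B\alpha_0(\cdot)p_B$, $\beta_n:=p\,\beta_{0,n}(\cdot)\,p$, $\gamma_n:=p\,\gamma_{0,n}(\cdot)\,p$; conditions $(1)$–$(3)$ are checked using $\alpha_0(p)\approx p_B$, $\beta_{0,n}\alpha_0(p)\approx p-\gamma_{0,n}(p)$, the orthogonality $\gamma_{0,n}(A)\perp\beta_{0,n}(B)$ from Theorem \ref{thm:2.7}, and $pxp=x$ on $\mathcal F$; and $(4)$ is immediate since $\gamma_n(1_{pAp})=p\gamma_{0,n}(p)p\precsim\gamma_{0,n}(p)\leq\gamma_{0,n}(1_A)\precsim a$.

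The hard part is the matrix case, and within it the construction of $a_0$, i.e. replacing the given $a\in(A\otimes{\rm M}_k)_+$ by a non-zero positive element of $A$ whose $k$-fold inflation is still Cuntz-below $a$; this is a divisibility statement about the Cuntz semigroup rather than a formal manipulation. For instance when $A$ is simple and infinite-dimensional (the case relevant here) it is immediate: such an $A$ has no minimal projections, so the simple hereditary subalgebra $\overline{a'Aa'}$ contains $k$ pairwise orthogonal non-zero positive elements $b_1,\dots,b_k$, and the standard argument in simple ${\rm C^*}$-algebras — repeatedly using Theorem \ref{thm:2.2}$(1)$ and $(4)$ to replace two non-zero positive elements by a non-zero positive element Cuntz-below both — yields a non-zero positive $a_0$ with $a_0\precsim b_l$ for every $l$, whence $a_0^{\oplus k}\precsim b_1\oplus\cdots\oplus b_k\precsim b_1+\cdots+b_k\precsim a'$. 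Everything else — the entrywise norm estimates, the compression estimates in the hereditary case, and the amplification behaviour of approximate embeddings — is routine bookkeeping.
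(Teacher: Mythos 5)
The paper itself offers no proof of this lemma (it is imported from \cite{FL}), so there is nothing internal to compare with; your overall strategy --- transport the data $(\mathcal F,\varepsilon,a)$ to $A$, apply Definition \ref{def:2.6}/Theorem \ref{thm:2.7} there, replace $B$ by $B\otimes{\rm M}_k$ resp.\ by a corner $p_BBp_B$ using the hypotheses on $\Omega$, and transport back --- is the standard one and is essentially the route of Fu--Lin. Your handling of the one genuinely non-formal point in the matrix case (finding $a_0$ with $a_0\otimes 1_{{\rm M}_k}\precsim a$) is correct for simple infinite-dimensional $A$, which is all the present paper ever uses; note, though, that the lemma as stated carries no simplicity hypothesis, and your divisibility argument does not cover the general unital case.

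Two steps would fail as written. First, in the hereditary case the maps $\beta_n:=p\,\beta_{0,n}(\cdot)\,p$ do not satisfy condition $(3)$ of Definition \ref{def:2.6}: for $x,y\in p_BBp_B$ the defect $\beta_n(x)\beta_n(y)-\beta_n(xy)$ is, up to $o(1)$, of size $\|(1-p)\beta_{0,n}(p_B)\|^2\|x\|\|y\|$, and from $\|p-\gamma_{0,n}(p)-\beta_{0,n}\alpha_0(p)\|<\varepsilon'$ together with the orthogonality of the two corners one only gets $\|(1-p)\beta_{0,n}(p_B)\|\lesssim \varepsilon'+\|p_B-\alpha_0(p)\|$, a fixed small number that does \emph{not} tend to $0$ as $n\to\infty$; condition $(3)$ demands exact limits for \emph{all} $x,y$ in the target algebra, so ``small'' is not enough (the same remark applies to the norm half of $(3)$). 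The standard repair is to choose, for large $n$, honest projections $e_n$ with $\|e_n-\beta_{0,n}(p_B)\|\to 0$ (asymptotic multiplicativity), note that $\|(1-p)e_n\|$ is small, take partial isometries $w_n$ with $w_n^*w_n=e_n$ and $w_nw_n^*\le p$, and set $\beta_n:=w_n\beta_{0,n}(\cdot)w_n^*$; then $(3)$ holds exactly in the limit while $(1)$ and $(4)$ survive with errors of order $\sqrt{\varepsilon'}$. Second, in the matrix case your justification of the norm half of $(2)$ --- ``$\|\alpha(x^*x)\|\approx\|x^*x\|$ after putting the entries of $x^*x$ into $\mathcal F_0$'' --- does not close: entrywise the hypothesis only gives $\|\alpha_0((x^*x)_{ij})\|\approx\|(x^*x)_{ij}\|$, which controls the norm of the amplified matrix only up to a factor $k$, so the step is circular as stated. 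One genuine fix is the power trick (put the entries of $(x^*x)^m$ into $\mathcal F_0$, use $\|\alpha(x)\|^{2m}\ge \max_i\|\alpha_0(((x^*x)^m)_{ii})\|-{\rm small}\ge k^{-1}\|x\|^{2m}-{\rm small}$ and take $m$ large), or a column compression $c$ with $\|c^*(x^*x)c\|\approx\|x\|^2$ whose entries are added to $\mathcal F_0$; by contrast, the corresponding limit statements for $\beta_{0,n}\otimes{\rm id}$ are fine via your induced $*$-homomorphism argument, applied along subsequences.
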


\section{the main result}

	\begin{theorem}\label{thm:3.1}
		Let $\Omega$ be a class of unital $\rm C^{*}$-algebras. Then  $A$ is asymptotically tracially in $\Omega$  for any infinite-dimensional
		simple unital $\rm C^{*}$-algebra $A\in\rm AT(\rm AT\Omega)$.
	\end{theorem}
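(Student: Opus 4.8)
The plan is to prove the inclusion $\mathrm{AT}(\mathrm{AT}\Omega)\subseteq\mathrm{AT}\Omega$ (on the level of infinite‑dimensional simple unital $C^*$‑algebras) by composing two layers of the asymptotic tracial approximation. Fix such an $A\in\mathrm{AT}(\mathrm{AT}\Omega)$, a finite set $\mathcal F\subseteq A$, an $\varepsilon>0$ and a nonzero $a\in A_+$. Since $A$ is simple and infinite‑dimensional, $\overline{aAa}$ is infinite‑dimensional, so I first fix mutually orthogonal nonzero positive $a_1,a_2\in\overline{aAa}$ with $a_1+a_2\precsim a$ (i.e.\ $\langle a_1\rangle+\langle a_2\rangle\le\langle a\rangle$), and put $b:=a_2/\|a_2\|$. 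Choosing tolerances $\varepsilon_1,\varepsilon'>0$ sufficiently small (say $<\varepsilon/2$ and $<1/8$), I would then apply Theorem~\ref{thm:2.7} to $A$ with $\mathcal F_1:=\mathcal F\cup\{b\}$, $\varepsilon_1$ and $a_1$, obtaining $C\in\mathrm{AT}\Omega$, a unital c.p.\ map $\alpha:A\to C$, and c.p.c.\ maps $\beta_n:C\to A$, $\gamma_n:A\to A\cap\beta_n(C)^{\perp}$ with $q_n:=\beta_n(1_C)$ and $p_n:=\gamma_n(1_A)$ projections, $p_n+q_n=1_A$, $p_n\precsim a_1$, $\|x-\gamma_n(x)-\beta_n\alpha(x)\|<\varepsilon_1$ on $\mathcal F_1$, $\alpha$ an $\mathcal F_1$‑$\varepsilon_1$‑approximate embedding, and $\|\beta_n(yz)-\beta_n(y)\beta_n(z)\|\to0$, $\|\beta_n(y)\|\to\|y\|$ for all $y,z\in C$. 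Next I apply Definition~\ref{def:2.6} to $C\in\mathrm{AT}\Omega$ with $\mathcal G:=\alpha(\mathcal F)\cup\{1_C\}$, $\varepsilon'$ and the positive element $c:=(\alpha(b)-\varepsilon_1)_+$ (nonzero because $\|\alpha(b)\|>1-\varepsilon_1>\varepsilon_1$), obtaining $B\in\Omega$, a c.p.c.\ map $\alpha':C\to B$, and c.p.c.\ maps $\beta'_m:B\to C$, $\gamma'_m:C\to C$ with $\|y-\gamma'_m(y)-\beta'_m\alpha'(y)\|<\varepsilon'$ on $\mathcal G$, $\alpha'$ a $\mathcal G$‑$\varepsilon'$‑approximate embedding, $\|\beta'_m(zw)-\beta'_m(z)\beta'_m(w)\|\to0$, $\|\beta'_m(z)\|\to\|z\|$ for all $z,w\in B$, and $\gamma'_m(1_C)\precsim c$.

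I would then form the composed data $\tilde\alpha:=\alpha'\circ\alpha:A\to B$, and, for a pair of indices $(n,m)$, $\tilde\beta_{n,m}:=\beta_n\circ\beta'_m:B\to A$ and $\tilde\gamma_{n,m}:=\gamma_n+\beta_n\circ\gamma'_m\circ\alpha:A\to A$. These are all c.p.c.\ (compositions and sums of c.p.c.\ maps; $\tilde\gamma_{n,m}$ is contractive because $\tilde\gamma_{n,m}(1_A)=p_n+\beta_n\gamma'_m(1_C)$ is a sum of orthogonal positive contractions, using $\beta_n\gamma'_m(1_C)\le\beta_n(1_C)=q_n=1_A-p_n$). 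Conditions (1) and (2) of Definition~\ref{def:2.6} are routine telescoping estimates: for $x\in\mathcal F$ one writes
$$x-\tilde\gamma_{n,m}(x)-\tilde\beta_{n,m}\tilde\alpha(x)=\big(x-\gamma_n(x)-\beta_n\alpha(x)\big)+\beta_n\big(\alpha(x)-\gamma'_m(\alpha(x))-\beta'_m\alpha'(\alpha(x))\big),$$
and since $\beta_n$ is contractive and $\alpha(\mathcal F)\subseteq\mathcal G$ this has norm $<\varepsilon_1+\varepsilon'<\varepsilon$ for every $n,m$; similarly, composing the approximate multiplicativity and approximate isometry of $\alpha$ and $\alpha'$ (using $\alpha(\mathcal F)\subseteq\mathcal G$) shows $\tilde\alpha$ is an $\mathcal F$‑$\varepsilon$‑approximate embedding. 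Condition (3) follows by a diagonal argument from
$$\|\tilde\beta_{n,m}(zw)-\tilde\beta_{n,m}(z)\tilde\beta_{n,m}(w)\|\le\|\beta'_m(zw)-\beta'_m(z)\beta'_m(w)\|+\|\beta_n(\beta'_m(z)\beta'_m(w))-\beta_n(\beta'_m(z))\beta_n(\beta'_m(w))\|,$$
where the first term tends to $0$ as $m\to\infty$ and, for fixed $m$, the second tends to $0$ as $n\to\infty$; taking $m_k\to\infty$ and then $n_k$ large enough (diagonalising over a countable dense subset of $B$) produces a single sequence $\tilde\beta_k:=\beta_{n_k}\beta'_{m_k}$ and correspondingly $\tilde\gamma_k:=\tilde\gamma_{n_k,m_k}$ with the required properties.

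The remaining, and essential, point is condition (4): $\tilde\gamma_k(1_A)=p_{n_k}+\beta_{n_k}\gamma'_{m_k}(1_C)\precsim a$. Since $p_{n_k}\precsim a_1$ and $\beta_{n_k}\gamma'_{m_k}(1_C)\le1_A-p_{n_k}$ with $a_1\perp a_2$, it suffices to arrange $\beta_{n_k}\gamma'_{m_k}(1_C)\precsim a_2$, whence $\tilde\gamma_k(1_A)\precsim a_1\oplus a_2\precsim a$. I would do this in three moves. First, since $\alpha'$ is $\varepsilon'$‑multiplicative on $\{1_C\}$ and $(\beta'_m)$ is asymptotically multiplicative, $\beta'_m\alpha'(1_C)$ is $(\varepsilon'+o_m(1))$‑close to a projection, so for $\varepsilon'$ small and $m$ large the factorisation of $1_C$ forces $\gamma'_m(1_C)$ to be within $1/4$ of a projection $e_m$, and by Theorem~\ref{thm:2.2}(1) $e_m\precsim\gamma'_m(1_C)\precsim c$. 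Second, asymptotic multiplicativity of $(\beta_n)$ transfers Cuntz subequivalence: if $e\precsim c$ and $\eta>0$, choosing a witness $s$ with $\|scs^*-e\|$ small and applying $\beta_n$ (using $\beta_n(s^*)=\beta_n(s)^*$, approximate multiplicativity, and Theorem~\ref{thm:2.2}(1)) gives $(\beta_n(e)-\eta)_+\precsim\beta_n(c)$ for all large $n$; moreover, as $e_m$ is a projection, $\beta_n(e_m)$ is close to a projection $f_{n,m}$ for large $n$, so (by a spectral‑gap argument) $\beta_n(e_m)\sim f_{n,m}$ and, for $\varepsilon'$ small, $\beta_n\gamma'_m(1_C)\sim f_{n,m}$ as well — and passing to this projection absorbs the $\eta$‑loss. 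Third, the choice $c=(\alpha(b)-\varepsilon_1)_+$ pays off: from $\|b-\gamma_n(b)-\beta_n\alpha(b)\|<\varepsilon_1$ and $\gamma_n(b)\ge0$ we get $\beta_n\alpha(b)\le b+\varepsilon_1 1_A$, hence $(\beta_n\alpha(b)-\varepsilon_1)_+\precsim b\precsim a_2$ by Theorem~\ref{thm:2.2}(4); feeding the inequality $e_m\precsim(c-\rho_m)_+$ (valid for some $\rho_m>0$ since $e_m$ is a projection) through the functional calculus of $\beta_n$ and using $\beta_n((\alpha(b)-\varepsilon_1-\rho_m)_+)$ is close to $(\beta_n\alpha(b)-\varepsilon_1-\rho_m)_+$, one obtains $f_{n,m}\precsim(\beta_n\alpha(b)-\varepsilon_1)_+\precsim a_2$. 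Thus for each $k$, with $m_k$ large and $n_k$ large depending on $m_k$ (and chosen also so as to meet the requirements of condition (3)), $\beta_{n_k}\gamma'_{m_k}(1_C)\sim f_{n_k,m_k}\precsim a_2$, which completes the verification that $A\in\mathrm{AT}\Omega$.

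The hard part is exactly the third paragraph: transferring the Cuntz‑smallness of the leftover $\gamma'_m(1_C)$ in $C$ to Cuntz‑smallness of $\beta_n\gamma'_m(1_C)$ in $A$. Because $\beta_n$ is only asymptotically (not exactly) multiplicative, Cuntz subequivalence is preserved only in the limit and only up to an $\eta$‑cut‑down; the way around this is to arrange — through the projection structure of Theorem~\ref{thm:2.7}, the forced near‑projection shape of $\gamma'_m(1_C)$, and the tailored choice $c=(\alpha(b)-\varepsilon_1)_+$ with $b\precsim a_2$ — that all the relevant elements are Cuntz‑equivalent to honest projections, for which the $\eta$‑loss is harmless. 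Conditions (1)–(3), the diagonal selection of $(n_k,m_k)$, and the c.p.c.\ bookkeeping are routine by comparison.
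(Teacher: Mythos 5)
Your proposal follows essentially the same route as the paper: split off orthogonal nonzero positives $a_1,a_2$ with $a_1+a_2\precsim a$, apply the approximation twice (first--level leftover $\precsim a_1$, second--level cutting element an image of $a_2$), compose $\tilde\alpha=\alpha'\circ\alpha$, $\tilde\beta=\beta\circ\beta'$, $\tilde\gamma=\gamma+\beta\circ\gamma'\circ\alpha$, and verify (1)--(3) by telescoping. Your variations are minor and, if anything, more careful than the paper's: you invoke only Definition \ref{def:2.6} at the second level and derive the near-projection shape of $\gamma'_m(1_C)$ by hand (the paper cites Theorem \ref{thm:2.7} for the intermediate algebra $B\in\mathrm{AT}\Omega$, which is not known to be simple, though in fact it only uses the Definition \ref{def:2.6} conclusions), your cutting element $(\alpha(b)-\varepsilon_1)_+$ plays the same role as the paper's $\alpha'(a_2)$ and makes the final estimate $(\beta_n\alpha(b)-\varepsilon_1)_+\precsim b\precsim a_2$ clean, and you perform an explicit diagonalization over the two indices for condition (3), whereas the paper composes with a single index and estimates as if the inner arguments $\beta''_n(x),\beta''_n(y)$ were fixed elements (note that your diagonalization over a countable dense subset tacitly assumes the final algebra is separable, an issue the paper does not address either).

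There is, however, a step in your verification of condition (4) that fails as literally stated: you claim $\beta_n\gamma'_m(1_C)\sim f_{n,m}$ because it is norm-close to that projection. Norm-closeness to a projection does not give Cuntz equivalence: an element of the form $f+\epsilon g$ with $g\perp f$ is within $\epsilon$ of $f$ but strictly dominates it in the Cuntz order. What the spectral-gap argument actually yields is $f_{n,m}\precsim\beta_n\gamma'_m(1_C)$ together with a possibly nonzero lower spectral part, which is the wrong direction, and Definition \ref{def:2.6}(4) demands the exact relation $\tilde\gamma_k(1_A)\precsim a$, not a cut-down of it. To be fair, the paper's own proof is loose at precisely the same point: its displayed chain asserts $\gamma'_n(1_A)+\beta'_n\gamma''_n\alpha'(1_A)\sim\gamma'_n(1_A)+(\beta'_n\gamma''_n\alpha'(1_A)-2\varepsilon)_+$, i.e.\ that the second summand is subequivalent to its own cut-down, which is the same unjustified move; so you have reproduced, not repaired, the gap. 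A genuine repair keeps your projections $e_m$ and $f_{n,m}$ but modifies the composed map: set $\tilde\gamma(x)=\gamma_n(x)+f_{n,m}\,\beta_n\gamma'_m(\alpha(x))\,f_{n,m}$. Since $\gamma'_m(\alpha(x))$ lies in the hereditary subalgebra generated by $\gamma'_m(1_C)$, the Schwarz inequality gives $\|\gamma'_m(\alpha(x))-e_m\gamma'_m(\alpha(x))e_m\|$ small, so the compression perturbs condition (1) only by a small amount; and now $\tilde\gamma(1_A)\le\gamma_n(1_A)+f_{n,m}$, an orthogonal sum, so $\tilde\gamma(1_A)\precsim\gamma_n(1_A)\oplus f_{n,m}\precsim a_1\oplus a_2\precsim a$, where $f_{n,m}\precsim a_2$ follows from your (correct) cut-down transfer, which is harmless exactly because $f_{n,m}$ is an honest projection.
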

	\begin{proof}
		We need to show that for any finite subset $F\subseteq A$, for any $\varepsilon>0$ and for any nonzero positive
		element $a\in A$, there exist a $\rm C^{*}$-algebra $D\in\Omega$, completely positive  contractive maps
		$\alpha:A\rightarrow D$, $\beta_{n}:D\rightarrow A$, and $\gamma_{n}:A\rightarrow A$ $(n\in{ \mathbb{N}})$
		such that
		
		$(1)$ $\|x-\gamma_{n}(x)-\beta_{n}\alpha(x)\|<\varepsilon$ for all $x\in F$, and for all $n\in  \mathbb{N}$,
		
		$(2)$ $\alpha$ is a $(F, \varepsilon)$-approximate embedding,
		
		$(3)$ $\mathop{\lim}\limits_{n\rightarrow\infty}\|\beta_{n}(xy)-\beta_{n}(x)\beta_{n}(y)\|=0$ and
		$\mathop{\lim}\limits_{n\rightarrow\infty}\|\beta_{n}(x)\|=\|x\|$ for all $x, y\in D$, and
		
		$(4)$ $\gamma_{n}(1_{A})\precsim a$ for all $n\in  \mathbb{N}$.
		
		Since $A$ is an infinite-dimensional simple unital $\rm C^{*}$-algebra, there exist positive elements
		$a_{1}, a_{2}\in A$ of norm one such that $a_{1}a_{2}=0$, $a_{1}\sim a_{2}$ and $a_{1}+a_{2}\precsim a$.
		
		Given $\varepsilon'\in(0,\varepsilon)$, with $G=F\cup\{a_{1}, a_{2}, 1_{A}\}$. Since $A\in\rm AT(\rm AT\Omega)$, by Theorem \ref{thm:2.7},
		there exist a $\rm C^{*}$-algebra $B\in\rm AT\Omega$, completely positive contractive maps
		$\alpha':A\rightarrow B$, $\beta_{n}':B\rightarrow A$, and $\gamma_{n}':A\rightarrow A\cap\beta_{n}'^{\perp}(B)$ $(n\in \mathbb{ N})$
		such that

		$(1')$ the map $\alpha'$ is unital  completely positive   linear map,  $\beta_n'(1_B)$ and $\gamma_n'(1_A)$ are projections and  $\beta_n'(1_B)+\gamma_n'(1_A)=1_A$ for all $n\in \mathbb{N}$,

		$(2')$ $\|x-\gamma_{n}'(x)-\beta_{n}'\alpha'(x)\|<\varepsilon'$ for all $x\in G$, and for all $n\in \mathbb{N}$,
		
		$(3')$ $\alpha'$ is a $(G,~\varepsilon')$-approximate embedding,
		
		$(4')$ $\mathop{\lim}\limits_{n\rightarrow\infty}\|\beta_{n}'(xy)-\beta_{n}'(x)\beta_{n}'(y)\|=0$ and
		$\mathop{\lim}\limits_{n\rightarrow\infty}\|\beta_{n}'(x)\|=\|x\|$ for all $x, y\in B$, and
		
		$(5')$ $\gamma_{n}'(1_{A})\precsim a_{1}$ for all $n\in  \mathbb{N}$.
		
		Since $B\in \rm AT\Omega$, for $\varepsilon'$  and $H=\{\alpha'(x):x\in G\}$, by Theorem \ref{thm:2.7}, there exist a $\rm C^{*}$-algebra $D\in\Omega$, completely positive  contractive maps
		$\alpha'':B\rightarrow D$, $\beta_{n}'':D\rightarrow B$, and $\gamma_{n}'':B\rightarrow B\cap\beta_{n}''^{\perp}(D)$ $(n\in  \mathbb{N})$
		such that
		
		$(1'')$ $\|\alpha'(x)-\gamma_{n}''\alpha'(x)-\beta_{n}''\alpha''\alpha'(x)\|<\varepsilon'$	 for all $x\in G$, and for all $n\in { \mathbb{N}}$,
		
		$(2'')$ $\alpha''$ is a $(H, \varepsilon')$-approximate embedding,
		
		$(3'')$ $\mathop{\lim}\limits_{n\rightarrow\infty}\|\beta_{n}''(xy)-\beta_{n}''(x)\beta_{n}''(y)\|=0$ and
		$\mathop{\lim}\limits_{n\rightarrow\infty}\|\beta_{n}''(x)\|=\|x\|$ for all $x, y\in D$, and
		
		$(4'')$ $\gamma_{n}''(1_{B})\precsim \alpha'(a_{2})$ for all $n\in \mathbb{N}$.
		
		Define $\alpha:A\rightarrow D$ by $\alpha(x)=\alpha''\alpha'(x)$, $\beta_{n}:D\rightarrow A$ by
		$\beta_{n}(x)=\beta_{n}'\beta_{n}''(x)$, and $\gamma_{n}:A\rightarrow A(n\in  \mathbb{N})$ by
		$\gamma_{n}(x)=\gamma_{n}'(x)+\beta_{n}'\gamma_{n}''\alpha'(x)$. Since for any $x\in A$, $\gamma_{n}'(x)\beta_{n}'\gamma_{n}''\alpha'(x)=0$, we have $\alpha, \beta_{n},\gamma_{n}$ are completely positive contractive maps.
		
		For all $x\in G$, we have
		
		 $\|x-\gamma_{n}(x)-\beta_{n}\alpha(x)\|=\|x-\gamma_{n}'(x)-\beta_{n}'\gamma_{n}''\alpha'(x)-
		\beta_{n}'\beta_{n}''\alpha''\alpha'(x)\|$
		
		 $\leq\|x-\gamma_{n}'(x)-\beta_{n}'\alpha'(x)\|+\|\beta_{n}'\alpha'(x)-\beta_{n}'\gamma_{n}''\alpha'(x)-
		\beta_{n}'\beta_{n}''\alpha''\alpha'(x)\|$
		
		$\leq\|x-\gamma_{n}'(x)-\beta_{n}'\alpha'(x)\|+
		\|\alpha'(x)-\gamma_{n}''\alpha'(x)-\beta_{n}''\alpha''\alpha'(x)\|$
		
		$<\varepsilon'+\varepsilon'<\varepsilon$, this is $(1)$.
		
		 By $(3')$, $(2'')$, and $\alpha(x)=\alpha''\alpha'(x)$, we can get
		
		 $\|\alpha(xy)-\alpha(x)\alpha(y)\|$

$=\|\alpha''\alpha'(xy)-\alpha''\alpha'(x)\alpha''\alpha'(y)\|$

$\leq \|\alpha''\alpha'(xy)-\alpha''(\alpha'(x)\alpha'(y))\|+
\|\alpha''(\alpha'(x)\alpha'(y))-\alpha''\alpha'(x)\alpha''\alpha'(y)\|$

$< 2\varepsilon'<\varepsilon$ for any $x, y\in F$.

We also have
		
		 $|\|\alpha(x)\|-\|x\||=|\|\alpha''\alpha'(x)\|-\|x\||$
		
		 $\leq|\|\alpha''\alpha'(x)\|-\|\alpha'(x)\||+|\|\alpha'(x)\|-\|x\||$
		
		 $<2\varepsilon$ for any $x\in F$.
		
		 Then $\alpha$ is a $(F, \varepsilon)$-approximate embedding, then we can get $(2)$.
		
		 By $(4')$ and $(3'')$, for $\varepsilon'$, there exists $N_{1}>0$, if $n>N_{1}$, then $\|\beta_{n}'(xy)-\beta_{n}'(x)\beta_{n}'(y)\|<\varepsilon'$, and for $\varepsilon'$, there exists $N_{2}>0$, if $n>N_{2}$, then $\|\beta_{n}''(xy)-\beta_{n}''(x)\beta_{n}''(y)\|<\varepsilon'$.
		
		 Let $N=\max\{N_{1},~N_{2}\}$, $n>N$, we have
		
		 $\|\beta_{n}(xy)-\beta_{n}(x)\beta_{n}(y)\|=\|\beta_{n}'\beta_{n}''(xy)-\beta_{n}'\beta_{n}''(x)\beta_{n}'\beta_{n}''(y)\|$
		
		 $\leq\|\beta_{n}'\beta_{n}''(xy)-\beta_{n}'(\beta_{n}''(x)\beta_{n}''(y))\|+
		 \|\beta_{n}'(\beta_{n}''(x)\beta_{n}''(y))-\beta_{n}'\beta_{n}''(x)\beta_{n}'\beta_{n}''(y)\|$
		
		 $<2\varepsilon'<\varepsilon$.
		
		 Similarly, $|\|\beta_{n}(x)\|-\|x\||<\varepsilon$, then we can get $(3)$.
		
		 Since
$\gamma_{n}(1_{A})=\gamma_{n}'(1_{A})+\beta_{n}'\gamma_{n}''\alpha'(1_{A})$,
 and

 		 $\gamma_{n}(1_{A})\sim (\gamma_{n}'(1_{A})-2\varepsilon)_++\beta_{n}'\gamma_{n}''\alpha'(1_{A})$

 $\sim \gamma_{n}'(1_{A})+(\beta_{n}'\gamma_{n}''\alpha'(1_{A})-2\varepsilon)_{+}$

 $\precsim\gamma_{n}'(1_{A})+(\beta_{n}'\alpha'(a_{2})-\varepsilon)_{+}$

 $\precsim a_{1}+a_{2}\precsim a$, then one has
		 $\gamma_{n}(1_{A})\precsim a$ for all $n\in \mathbb{N}$, this is $(4)$.
		
		 Therefore, $A$ is asymptotically tracially in $\Omega$.

	\end{proof}

\begin{corollary}\label{cor:3.2} Let $\Omega$ be a class of unital simple  $\rm C^{*}$-algebras such that $B$ has tracial topological  rank zero for any $B\in \Omega$.  Let $A$ be a simple unital $\rm C^{*}$-algebra such that $A$ is  asymptotically tracially
in $\Omega$. Then $A$ has tracial topological rank  zero.
\end{corollary}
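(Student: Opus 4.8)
The plan is to deduce this directly from Theorem \ref{thm:3.1}, specialized to the class $\mathcal{I}^{(0)}$ of finite-dimensional ${\rm C^*}$-algebras, together with the equivalence recorded in the Note following Definition \ref{def:2.6}: a simple unital ${\rm C^*}$-algebra has tracial topological rank zero if and only if it is asymptotically tracially in $\mathcal{I}^{(0)}$.

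First I would dispose of the trivial case. If $A$ is finite-dimensional then, being simple and unital, $A\cong {\rm M}_k(\mathbb{C})$ for some $k$, and choosing $p=1_A$ and $B=A\in\mathcal{I}^{(0)}$ in Definition \ref{def:2.3} shows at once that $A$ has tracial topological rank zero. So from now on I may assume $A$ is infinite-dimensional.

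Next I would translate the hypothesis on $\Omega$. Every $B\in\Omega$ is simple, unital and of tracial topological rank zero, hence by the Note $B$ is asymptotically tracially in $\mathcal{I}^{(0)}$; that is, $\Omega\subseteq {\rm AT}\,\mathcal{I}^{(0)}$. The operation ${\rm AT}(\cdot)$ is clearly monotone: enlarging the target class in Definition \ref{def:2.6} can only make the four conditions easier to arrange, since any data witnessing membership for the smaller class witnesses it for the larger one. Therefore ${\rm AT}\,\Omega\subseteq {\rm AT}({\rm AT}\,\mathcal{I}^{(0)})$. Since $A$ is asymptotically tracially in $\Omega$, i.e. $A\in {\rm AT}\,\Omega$, we obtain $A\in {\rm AT}({\rm AT}\,\mathcal{I}^{(0)})$.

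Finally, Theorem \ref{thm:3.1}, applied with its class $\Omega$ taken to be $\mathcal{I}^{(0)}$ and with $A$ an infinite-dimensional simple unital ${\rm C^*}$-algebra in ${\rm AT}({\rm AT}\,\mathcal{I}^{(0)})$, yields that $A$ is asymptotically tracially in $\mathcal{I}^{(0)}$; a last appeal to the Note gives that $A$ has tracial topological rank zero. The only point requiring attention is the two-directional use of the Fu--Lin equivalence ``asymptotically tracially in $\mathcal{I}^{(0)}$ $\iff$ tracial topological rank zero'' — applied for each member of $\Omega$ in one direction and for $A$ in the other — and checking that the objects involved are indeed simple and unital so that Definition \ref{def:2.3} genuinely applies; beyond this there is no new analytic estimate, all of the work having been carried out in Theorem \ref{thm:3.1}.
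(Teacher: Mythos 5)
Your proposal is correct and follows essentially the same route as the paper: translate the hypothesis via the Fu--Lin equivalence to get $\Omega\subseteq {\rm AT}\,\mathcal{I}^{(0)}$, use monotonicity of ${\rm AT}(\cdot)$ to place $A$ in ${\rm AT}({\rm AT}\,\mathcal{I}^{(0)})$, and apply Theorem \ref{thm:3.1}. Your explicit treatment of the finite-dimensional case (needed because Theorem \ref{thm:3.1} assumes $A$ infinite-dimensional) and your check that the equivalence is applied to simple unital algebras are careful refinements of the paper's one-line argument, not a different approach.
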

\begin{proof} Since $A$ has tracial topological rank  zero if and only if $A$  is  asymptotically tracially in $\mathcal{I}^{(0)}$, by Theorem \ref{thm:3.1}, we can get this result.

\end{proof}
\begin{corollary}\label{cor:3.3} Let $\Omega$ be a class of unital simple  $\rm C^{*}$-algebras such that $B$ has tracial topological  rank  one  for any $B\in \Omega$.  Let $A$ be a simple unital $\rm C^{*}$-algebra such that $A$ is  asymptotically tracially
in $\Omega$. Then $A$ has tracial  topological rank one.
\end{corollary}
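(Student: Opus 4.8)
The plan is to follow the proof of Corollary \ref{cor:3.2} verbatim, with $\mathcal{I}^{(0)}$ replaced by $\mathcal{I}^{(1)}$. First I would dispose of the degenerate possibility: if $A$ is finite-dimensional then, being unital and simple, it is a matrix algebra, which lies in $\mathcal{I}^{(0)}\subseteq\mathcal{I}^{(1)}$ and therefore trivially has tracial topological rank at most one. So from now on I may assume $A$ is infinite-dimensional, which is exactly the hypothesis needed to invoke Theorem \ref{thm:3.1}.

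Next I would record the two observations that make the argument run. On one hand, by the remark following Definition \ref{def:2.6} (see \cite{FL}), a unital simple $\rm C^{*}$-algebra has tracial topological rank one if and only if it is asymptotically tracially in $\mathcal{I}^{(1)}$; since every $B\in\Omega$ is unital, simple, and has tracial topological rank one, this gives $B\in\rm AT\mathcal{I}^{(1)}$, i.e. $\Omega\subseteq\rm AT\mathcal{I}^{(1)}$. On the other hand, the assignment $\Omega\mapsto\rm AT\Omega$ is monotone: if $\Omega_{1}\subseteq\Omega_{2}$, then any $\rm C^{*}$-algebra $B\in\Omega_{1}$ together with the maps $\alpha,\beta_{n},\gamma_{n}$ witnessing Definition \ref{def:2.6} for $\Omega_{1}$ also witnesses it for $\Omega_{2}$, so $\rm AT\Omega_{1}\subseteq\rm AT\Omega_{2}$. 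Applying monotonicity to $\Omega\subseteq\rm AT\mathcal{I}^{(1)}$ yields $\rm AT\Omega\subseteq\rm AT(\rm AT\mathcal{I}^{(1)})$, and since $A$ is asymptotically tracially in $\Omega$ we conclude $A\in\rm AT(\rm AT\mathcal{I}^{(1)})$.

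Finally I would apply Theorem \ref{thm:3.1} with the class $\mathcal{I}^{(1)}$ (which is a class of unital $\rm C^{*}$-algebras, as required) in place of $\Omega$: since $A$ is an infinite-dimensional simple unital $\rm C^{*}$-algebra in $\rm AT(\rm AT\mathcal{I}^{(1)})$, it is asymptotically tracially in $\mathcal{I}^{(1)}$, and hence, by the characterization quoted above, $A$ has tracial topological rank one. I do not expect any genuine obstacle here: the only points requiring a moment's care are the monotonicity of $\rm AT(\cdot)$ and the finite-dimensional edge case, both immediate, while all the substantive content is already contained in Theorem \ref{thm:3.1} and in the identification of ``tracial topological rank one'' with asymptotic tracial approximation by $\mathcal{I}^{(1)}$.
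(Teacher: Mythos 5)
Your proposal is correct and follows essentially the same route as the paper: identify tracial topological rank one with membership in $\rm AT\mathcal{I}^{(1)}$, use monotonicity of $\rm AT(\cdot)$ to place $A$ in $\rm AT(\rm AT\mathcal{I}^{(1)})$, and apply Theorem \ref{thm:3.1}. The only difference is that you spell out the monotonicity step and the finite-dimensional edge case, which the paper leaves implicit.
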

\begin{proof}Since $A$ has tracial topological rank  one if and only if $A$  is  asymptotically tracially in $\mathcal{I}^{(1)}$, by Theorem \ref{thm:3.1}, we can get this result.

\end{proof}
\begin{corollary}\label{cor:3.4}
 Let $\Omega$ be a class of unital simple  $\rm C^{*}$-algebras such that $B$ has  generalized tracial rank at most one  for any $B\in \Omega$.  Let $A$ be a simple unital $\rm C^{*}$-algebra such that $A$ is  asymptotically tracially
in $\Omega$. Then $A$ has generalized tracial rank at most one.
\end{corollary}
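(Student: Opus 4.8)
The plan is to mimic the proofs of Corollaries \ref{cor:3.2} and \ref{cor:3.3}, reducing the statement to Theorem \ref{thm:3.1} by re-expressing ``generalized tracial rank at most one'' as an asymptotic tracial approximation property. Recall from the Note following Definition \ref{def:2.6} (and from \cite{FL}) that, for a unital simple $\rm C^{*}$-algebra $D$, one has $gTR(D)\leq 1$ if and only if $D$ is asymptotically tracially in the class $P_1$ of $1$-dimensional $\rm NCCW$ complexes; in the unital simple context one may take $P_1$ to consist of unital $\rm C^{*}$-algebras, so that Theorem \ref{thm:3.1} applies with $P_1$ in the role of $\Omega$.

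First I would translate the hypothesis: every $B\in\Omega$ is unital simple with $gTR(B)\leq 1$, hence, by the equivalence just quoted, every $B\in\Omega$ is asymptotically tracially in $P_1$, i.e. $\Omega\subseteq{\rm AT}P_1$. Next I would invoke the evident monotonicity of the operation $\Omega\mapsto{\rm AT}\Omega$: since Definition \ref{def:2.6} only asks for the existence of a single algebra in the prescribed class, $\Omega\subseteq\Omega'$ forces ${\rm AT}\Omega\subseteq{\rm AT}\Omega'$. Applying this to $\Omega\subseteq{\rm AT}P_1$ gives ${\rm AT}\Omega\subseteq{\rm AT}({\rm AT}P_1)$, and in particular $A\in{\rm AT}({\rm AT}P_1)$.

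If $A$ is infinite-dimensional, Theorem \ref{thm:3.1} (with $P_1$ in place of $\Omega$) then yields $A\in{\rm AT}P_1$, i.e. $gTR(A)\leq 1$, which is the assertion. The finite-dimensional case is immediate and handled separately: a finite-dimensional simple unital $\rm C^{*}$-algebra is some ${\rm M}_k(\mathbb{C})$, and taking $B=A$ and $p=1_A$ in Definition \ref{def:2.4} gives $gTR({\rm M}_k(\mathbb{C}))\leq 1$ directly.

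The only delicate point --- the ``main obstacle'', modest as it is --- is the bookkeeping around the building-block class: one must check that the version of $P_1$ in play really is a class of unital $\rm C^{*}$-algebras, so that the hypothesis of Theorem \ref{thm:3.1} is met, and that the equivalence ``asymptotically tracially in $P_1$ $\Leftrightarrow$ $gTR\leq 1$'' is being quoted from \cite{FL} precisely in the unital simple setting we need. Once that is pinned down, the proof collapses to a single invocation of Theorem \ref{thm:3.1}, exactly parallel to Corollaries \ref{cor:3.2} and \ref{cor:3.3}.
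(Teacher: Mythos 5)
Your proposal is correct and is essentially the paper's own argument: the paper's one-line proof likewise reduces the corollary to Theorem \ref{thm:3.1} via the equivalence ``$gTR(A)\leq 1$ if and only if $A$ is asymptotically tracially in $P_1$''. You merely make explicit what the paper leaves implicit (the monotonicity $\Omega\subseteq{\rm AT}P_1\Rightarrow{\rm AT}\Omega\subseteq{\rm AT}({\rm AT}P_1)$ and the trivial finite-dimensional case excluded by Theorem \ref{thm:3.1}), which is a welcome but not essentially different refinement.
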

\begin{proof}Since $A$ has generalized tracial rank at most one if and only if $A$  is  asymptotically tracially in $P_1$, by Theorem \ref{thm:3.1}, we can get this result.

\end{proof}

Recall that a unital $\rm C^{*}$-algebra is said to have real rank zero, and written $RR(A)=0$, if the set of invertible self-adjoint elements is dense in $A_{sa}$ (the self-adjoint part of $A$).

The proof of the following Theorem is similar with Theorem 3.6.11 of \cite{L2}.

	\begin{theorem} \label{thm:3.5}
		Let $\Omega$ be a class of unital $\rm C^{*}$-algebras which have real rank zero. Let $A$ be a simple unital
		$\rm C^{*}$-algebra such that $A$ is asymptotical tracially in $\Omega$.
		Then $RR(A)=0$.
	\end{theorem}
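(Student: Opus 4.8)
The plan is to use the characterization of real rank zero via approximate units and projections, combined with the asymptotic tracial approximation structure provided by Theorem \ref{thm:2.7}. Recall that for a simple unital $\rm C^{*}$-algebra $A$, one has $RR(A)=0$ if and only if every hereditary $\rm C^{*}$-subalgebra has an approximate identity consisting of projections; equivalently (for the self-adjoint formulation), for every self-adjoint $x\in A$ and every $\varepsilon>0$ there is a projection $p\in A$ and a self-adjoint element $y\in (1-p)A(1-p)$ with $\|x - (pxp \oplus y)\|$ small and $pxp$ lying (approximately) in a $\rm C^{*}$-subalgebra with real rank zero, so that $pxp$ can be perturbed to an invertible self-adjoint element of $pAp$. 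The point is that the ``large'' corner $pAp$ is absorbed into $B\in\Omega$ (which has real rank zero) up to a small error, while the ``small'' corner $(1-p)A(1-p)$ is Cuntz-small, hence contains a self-adjoint element that is invertible in that corner and has small norm contribution — and smallness of the defect corner is exactly what one needs since invertibility in a corner plus invertibility in the complementary corner yields an invertible self-adjoint element of $A$ close to $x$.

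The key steps, in order, are as follows. First, fix a self-adjoint $x\in A$ with $\|x\|\le 1$ and $\varepsilon>0$; it suffices to approximate $x$ by an invertible self-adjoint element within $\varepsilon$. Second, apply Theorem \ref{thm:2.7} with $\mathcal{F} = \{x, 1_A\}$, with tolerance $\varepsilon'\ll\varepsilon$, and with a fixed nonzero positive element $a$ chosen so that elements Cuntz-below $a$ contribute negligibly; this produces $B\in\Omega$, a unital completely positive $\alpha:A\to B$, c.p.c.\ maps $\beta_n:B\to A$ and $\gamma_n:A\to A\cap\beta_n(B)^{\perp}$ with $\beta_n(1_B)$, $\gamma_n(1_A)$ orthogonal projections summing to $1_A$, $\|x - \gamma_n(x) - \beta_n\alpha(x)\| < \varepsilon'$, and $\beta_n$ asymptotically multiplicative. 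Third, fix $n$ large enough that $\beta_n$ is $\delta$-multiplicative and $\delta$-norm-preserving on a suitable finite subset of $B$ (including $\alpha(x)$ and its functional-calculus relatives); then set $p = \beta_n(1_B) = 1_A - \gamma_n(1_A)$. Fourth, in $B$ apply $RR(B)=0$ to the self-adjoint element $\alpha(x)$ (or rather $\alpha(x)$ restricted to an appropriate unital corner) to find an invertible self-adjoint $b\in B$ with $\|b - \alpha(x)\|$ small; push it forward via $\beta_n$ and use asymptotic multiplicativity to argue that $\beta_n(b)$ is, up to small error, self-adjoint and invertible inside the corner $pAp$ — more precisely, apply functional calculus to make $\beta_n(b)$ genuinely self-adjoint and then perturb its spectrum away from $0$, using that $\|\beta_n(b)^2 - \beta_n(b^2)\|$ is small so $\beta_n(b)$ is close to an invertible element of $pAp$. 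Fifth, on the complementary corner $(1-p)A(1-p)$, which equals $\gamma_n(1_A) A \gamma_n(1_A)$, take the self-adjoint element $\gamma_n(x)$; since the whole corner is unital with unit a projection, choose any invertible self-adjoint element $z$ of that corner (e.g.\ $\lambda\gamma_n(1_A)$ for a small nonzero scalar $\lambda$, or a small perturbation of $\gamma_n(x)$ within the corner using that $\gamma_n(1_A)$ is the unit there), accepting an error on the $\gamma_n(x)$ part of size at most $\|\gamma_n(x)\|$ plus $\lambda$. Sixth, assemble: the element $\beta_n(b) \oplus z$ (interpreted via the orthogonal decomposition $1_A = p + (1-p)$) is self-adjoint and invertible in $A$, and is within $\varepsilon$ of $x$ by the triangle inequality combining $(1'')$-type estimates, the $RR(B)=0$ approximation, and the correction terms.

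The main obstacle is controlling the error contributed by the small corner $(1-p)A(1-p)$: replacing $\gamma_n(x)$ by something invertible in that corner costs an error of order $\|\gamma_n(x)\|$, which is \emph{not} a priori small — Theorem \ref{thm:2.7} only gives $\gamma_n(1_A)\precsim a$, a Cuntz-smallness statement, not a norm-smallness statement. The fix is the standard one used in proving that tracial rank zero implies real rank zero (this is the content of the ``similar to Theorem 3.6.11 of \cite{L2}'' remark): one does not approximate $x$ directly but works with the positive/negative parts, or one first arranges via a preliminary functional-calculus cutting that the relevant self-adjoint element is supported where it matters, so that the genuinely problematic contribution from the small corner is already absorbed. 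Concretely, instead of feeding $x$ into Theorem \ref{thm:2.7}, one feeds in spectral projections or continuous-functional-calculus approximants of $x$ (so that $\gamma_n$ applied to these is still controlled), and uses that in a simple $\rm C^{*}$-algebra a hereditary subalgebra corresponding to a Cuntz-small positive element still has enough projections; the net effect is that the error from the small corner can be made smaller than $\varepsilon$ because one only needs the \emph{projections} in that corner to approximate, not a norm bound on $\gamma_n(x)$ itself. Carefully bookkeeping these perturbations — and verifying that ``invertible in $pAp$'' together with ``invertible in $(1-p)A(1-p)$'' genuinely gives ``invertible in $A$'' with the correct norm estimate — is the technical heart of the argument, but each ingredient is routine given Theorem \ref{thm:2.7} and $RR(B)=0$.
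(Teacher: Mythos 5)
Your overall skeleton (invoke Theorem \ref{thm:2.7}, use $RR(B)=0$ to make the image corner invertible, then patch the defect corner) is the same as the paper's, but at the decisive step you have a genuine gap rather than a proof. You correctly identify the obstacle yourself: replacing $\gamma_n(x)$ by an invertible element of the corner $\gamma_n(1_A)A\gamma_n(1_A)$ (e.g.\ $\lambda\gamma_n(1_A)$) costs an error of order $\|\gamma_n(x)\|$, and Theorem \ref{thm:2.7} only gives Cuntz-smallness of $\gamma_n(1_A)$, never norm-smallness of $\gamma_n(x)$. Your proposed repair --- ``work with positive/negative parts,'' ``feed spectral projections into Theorem \ref{thm:2.7},'' ``one only needs the projections in that corner'' --- is not an argument: it is never specified which elements are fed in, why $\gamma_n$ of them is controlled, or how the resulting projections produce an invertible self-adjoint element of the defect corner close to $\gamma_n(x)$. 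Moreover the appeal to ``a hereditary subalgebra corresponding to a Cuntz-small positive element still has enough projections'' in a simple $\rm C^{*}$-algebra is exactly what is not available for free here; any projection-existence statement must itself come from the asymptotic tracial approximation, not from simplicity.

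The mechanism that actually closes the argument (Lin's device from Theorem 3.6.11 of \cite{L2}, which the paper follows) is different from what you sketch, and the order of operations matters. \emph{Before} applying Theorem \ref{thm:2.7}, one reserves a nonzero projection $p_1\in{\rm Her}(f(a))$ for a bump function $f$ at $0\in{\rm sp}(a)$, and perturbs $a$ to $a'=g(a)a$ with $\|a-a'\|<\varepsilon/2$ and $a'p_1=p_1a'=0$. One then applies Theorem \ref{thm:2.7} \emph{inside the hereditary corner} $A_1=(1-p_1)A(1-p_1)$ (legitimate by Lemma \ref{lem:2.8}) with the control element taken to be $p_1$, so that $\gamma_n(1_{A_1})\precsim p_1$; this yields a partial isometry $v$ with $v^*v=1_A-p_1-\beta_n(1_B)$ and $vv^*\le p_1$. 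The defect element $\gamma_n(a')$ is then \emph{kept}, not discarded: one forms
$d=\gamma_n(a')+\tfrac{\varepsilon}{16}v+\tfrac{\varepsilon}{16}v^*+\tfrac{\varepsilon}{4}(p_1-vv^*)$,
which by the $2\times 2$ off-diagonal trick is an invertible self-adjoint element of $(1_A-\beta_n(1_B))A(1_A-\beta_n(1_B))$ satisfying $\|d-\gamma_n(a')\|<\varepsilon/2$; adding the invertible $\beta_n(b_1)$ from the $RR(B)=0$ corner gives the required invertible self-adjoint approximant of $a$. Without reserving $p_1$ in advance (so that the defect corner can be moved under $p_1$ and dominated by cheap off-diagonal terms of norm $O(\varepsilon)$), the norm of $\gamma_n(x)$ cannot be tamed, and your proof does not go through.
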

	\begin{proof}
	   We need to show that for any $\varepsilon>0$, for any $a\in A_{sa}$ (we may assume that $\|a\|\leq 1$), there exist the invertible element $b\in A_{sa}$, such that
	   $\|a-b\|<\varepsilon$.
	
	   Define a continuous function $f\in C([-1,1])$ with $0\leq f\leq 1$, and

$$f(t)=\left\{
  \begin{array}{ll}
   1 & if\ |t|\geq \varepsilon/8\\
 linear & if\ \varepsilon/8< |t|<\varepsilon/4\\
  0 & if\ |t|\geq \varepsilon/4.
  \end{array} \right.
 $$

If $0\notin {\rm sp}(a)$, then $a$ is invertible. So we assume that $0\in {\rm sp}(a)$,
	   then $f(a)\neq0$. Since $A$ has real rank zero, then there exists nonzero projection $p_1\in {\rm Her}(f(a))$.

	   Define a continuous function $g\in C([-1,1])$, such that $0\leq g(t)\leq 1$, with

$$g(t)=\left\{
  \begin{array}{ll}
   1 & if\ |t|\geq \varepsilon/2\\
 linear & if\ \varepsilon/4< |t|<\varepsilon/4\\
  0 & if\ |t|\leq \varepsilon/4.
  \end{array} \right.
 $$

 Then we have $\|g(a)a-a\|<\varepsilon/2$ and $g(a)f(a)=0$. Set $a'=g(a)a$, then $a'p_1=p_1a'=0$ and $\|a-a'\|<\varepsilon/2$.

	   Set $A_{1}=(1-p_{1})A(1-p_{1})$, by Lemma \ref{lem:2.8},  $A_{1}\in\rm AT\Omega$, for any finite subset
	   $a'\in A_1$, and $\varepsilon'>0$ with $\varepsilon'<\varepsilon$,  by Theorem \ref{thm:2.7}, there exist $\rm C^{*}$-algebra $B\in\Omega$, completely positive contractive  maps
	   $\alpha:A_1\rightarrow B$, $\beta_{n}:B\rightarrow A_1$, and $\gamma_{n}:A_1\rightarrow A_1\cap\beta_{n}^{\perp}(B)$ $(n\in  \mathbb{N})$
	   such that
	
	   $(1)$ the map $\alpha$ is unital completely positive linear map, $\beta_{n}(1_{B})$ and $\gamma_{n}(1_{A})$
	   are projections and $\beta_{n}(1_{B})+\gamma_{n}(1_{A})=1_{A_{1}	}$ for all $n\in \mathbb{N}$,
	
	   $(2)$ $\|a'-\gamma_{n}(a')-\beta_{n}\alpha(a')\|<\varepsilon$,  and for all $n\in \mathbb{N}$,
	
	   $(3)$ $\alpha$ is a $(\{a'\},~\varepsilon)$-approximate embedding,
	
	   $(4)$ $\mathop{\lim}\limits_{n\rightarrow\infty}\|\beta_{n}(xy)-\beta_{n}(x)\beta_{n}(y)\|=0$ and
	   $\mathop{\lim}\limits_{n\rightarrow\infty}\|\beta_{n}(x)\|=\|x\|$ for all $x, y\in B$, and
	
	   $(5)$ $\gamma_{n}(1_{A_{1}})\precsim p_{1}$ for all $n\in  \mathbb{N}$.

	   Since $\alpha(a')\in B$,  and $B$ has real rank zero, for $\alpha(a')$, exist an invertible  element $b_{1}\in B_{sa}$ such that
	   $\|\alpha(a')-b_{1}\|<\varepsilon$, then $\|\beta_{n}\alpha(a')-\beta_{n}(b_{1})\|<\varepsilon$.
	
	   By $(4)$, for $\varepsilon'$, there  exists  $N>0$, if $n>N$, then
	
	   $\|\beta_{n}(b_{1}b_{1}^{-1})-\beta_{n}(b_{1})\beta_{n}(b_{1}^{-1})\|=\|\beta_{n}(1_{B})-\beta_{n}(b_{1})\beta_{n}(b_{1}^{-1})\|
	   <\varepsilon'$, and
	
	   $\|\beta_{n}(b_{1}^{-1}b_{1})-\beta_{n}(b_{1}^{-1})\beta_{n}(b_{1})\|=\|\beta_{n}(1_{B})-\beta_{n}(b_{1}^{-1})\beta_{n}(b_{1})\|
	   <\varepsilon'$,
	
	   Then $\beta_{n}(b_{1})$ is an invertible element of $\beta_{n}(1_{B})A\beta_{n}(1_{B})$.
	
	   Since $\beta_{n}(1_{B})+\gamma_{n}(1_{A})=1_{A_{1}}$, by $(5)$, then $1_{A_{1}}-\beta_{n}(1_{B})\precsim p_{1}$, i.e.,
	   $1_{A}-p_{1}-\beta_{n}(1_{B})\precsim p_{1}$.
	
	   Let $v\in A$ such that $v^{*}v=1_{A}-p_{1}-\beta_{n}(1_{B})$, $vv^{*}\leq p_{1}$.
	
	   Set $d=\gamma_{n}(a')+\dfrac{\varepsilon}{16}v+\dfrac{\varepsilon}{16}v^{*}+\dfrac{\varepsilon}{4}(p_{1}-vv^{*})$.
	  
By the  matrix representation,
   $\gamma_{n}(a')+\dfrac{\varepsilon}{16}v+\dfrac{\varepsilon}{16}v^{*}$ is invertible in
    $(1_{A_{1}}-\beta_{n}(1_{B})+vv^{*})A(1_{A_{1}}-\beta_{n}(1_{B})+vv^{*})$. Therefore, $d$ is an invertible self-adjoint element
    in $(1_{A_{1}}-\beta_{n}(1_{B}))A(1_{A_{1}}-\beta_{n}(1_{B}))$. Moreover,
    $\|d-\gamma_{n}(a')\|<\dfrac{\varepsilon}{2}$.

    Hence, $\beta_{n}(b_{1})+d$ is invertible in $A_{sa}$.

    Finally, we have

    $\|a-(\beta_{n}(b_{1})+d)\|$

    $\leq\|a-a'\|+\|a'-\gamma_{n}(a')-\beta_{n}\alpha(a')\|$

    $+\|\beta_{n}\alpha(a')-\beta_{n}(b_{1})\|+\|d-\gamma_{n}(a')\|$

    $<\varepsilon/2+\varepsilon/2+\varepsilon'<2\varepsilon$.

   Therefore, $A$ has real rank zero.
	\end{proof}

\begin{theorem}\label{thm:3.6}
Let $\Omega$ be a class of stably finite exact unital
${\rm C^*}$-algebras which have  $(n, m)$ comparison (with $m\neq 0$). Let $A$ be a unital separable simple ${\rm C^*}$-algebra. If  $A$ is asymptotically tracially in $\Omega$, then $A$  has   $(n, m)$ comparison (with $m\neq 0$).
\end{theorem}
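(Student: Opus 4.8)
The plan is to verify Definition \ref{def:2.1} for $A$: given $a,b\in{\rm M}_\infty(A)_+$ with $na\oplus m1_A\precsim nb$, show $a\precsim b$. Since Cuntz subequivalence is scale invariant we may assume $\|a\|\le 1$, $\|b\|=1$ (and $b\ne 0$, as $A$ is unital), and since $M_l$ has $(n,m)$ comparison we may assume $A$ is infinite dimensional. It suffices to prove $(a-\varepsilon)_+\precsim b$ for every $\varepsilon>0$; moreover it is enough to treat $a,b\in A_+$, because for $a,b\in{\rm M}_l(A)_+$ one simply amplifies $\alpha,\beta_k,\gamma_k$ of Theorem \ref{thm:2.7} by ${\rm id}_{{\rm M}_l}$, and since $\alpha$ is unital the summand $m1_A$ is carried to $m1_B$, so the $(n,m)$ comparison of $B$ (not of ${\rm M}_l(B)$) still applies verbatim. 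Fix $\varepsilon>0$. First I would apply the standard fact (Rørdam/Kirchberg--Rørdam: $x\precsim y$ iff for each $\eta>0$ there is $\sigma>0$ with $(x-\eta)_+\precsim(y-\sigma)_+$) to the \emph{hypothesis}, getting a fixed $\delta>0$ (necessarily $\delta<1$) with $n(a-\varepsilon/2)_+\oplus m1_A\precsim n(b-\delta)_+$. Because $(b-\delta)_+\ne 0$ and $A$ is simple and infinite dimensional, ${\rm Her}((b-\delta)_+)$ contains $n+m$ mutually orthogonal, mutually Cuntz equivalent nonzero positive elements; together with Theorem \ref{thm:2.2}(3) (when $b$ is purely positive, one also gets a nonzero $d\perp (b-\delta/2)_+$ with $(b-\delta/2)_+\oplus d\precsim b$) this reserves "room'' inside $b$ for the $\gamma$-contribution.

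Next I would choose $\varepsilon'>0$ with $\varepsilon'\ll\min(\varepsilon,\delta)$ and a finite set $\mathcal F$ containing $(a-\varepsilon)_+,(a-\varepsilon/2)_+,(b-\delta)_+,(b-\delta/2)_+$, their relevant powers, $1_A$, and the entries of a fixed $s\in{\rm M}_{n+m}(A)$ witnessing the above subequivalence approximately, and apply Theorem \ref{thm:2.7} with $\mathcal F,\varepsilon'$ and small element $\mathfrak a_0$ (a nonzero element with $p\precsim\mathfrak a_0$ absorbable as above). This yields $B\in\Omega$, a unital $\mathcal F$-$\varepsilon'$-multiplicative $\alpha:A\to B$, asymptotically multiplicative/isometric $\beta_k:B\to A$, and $\gamma_k:A\to A\cap\beta_k(B)^\perp$ with $p:=\gamma_k(1_A)$ a projection, $p\precsim\mathfrak a_0$, $p+\beta_k(1_B)=1_A$, and $\|x-\gamma_k(x)-\beta_k\alpha(x)\|<\varepsilon'$ on $\mathcal F$. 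Compressing this last inequality by $p$ gives $\|px-xp\|<2\varepsilon'$ and $\beta_k\alpha(x)\approx(1-p)x(1-p)$ within $O(\varepsilon')$ for $x\in\mathcal F$. Using $\alpha(1_A)=1_B$, $\mathcal F$-$\varepsilon'$-multiplicativity of $\alpha$ on $s$ and on powers of $(b-\delta/2)_+$, and Theorem \ref{thm:2.2}(1), I would transport the hypothesis into $B$:
$$ n(\alpha((a-\varepsilon/2)_+)-\delta_1)_+\oplus m1_B\;\precsim\;n(\alpha((b-\delta/2)_+)-\delta/2)_+,\qquad \delta_1\ll\delta\ \text{controlled.} $$

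Then I would apply the $(n,m)$ comparison of $B$ to get $(\alpha((a-\varepsilon/2)_+)-\delta_1)_+\precsim(\alpha((b-\delta/2)_+)-\delta/2)_+$ in $B$, and push it back through $\beta_k$: since $\beta_k$ is asymptotically multiplicative it asymptotically commutes with continuous functional calculus on the finitely many relevant elements and asymptotically preserves $\precsim$, so for large $k$
$$ (\beta_k\alpha((a-\varepsilon/2)_+)-\delta_2)_+\;\precsim\;(\beta_k\alpha((b-\delta/2)_+)-\delta/2)_+,\qquad \delta_2\ll\delta\ \text{controlled.} $$
Because $\beta_k\alpha((b-\delta/2)_+)\le b+\varepsilon'1_A$ and $\delta/2>\varepsilon'$, Theorem \ref{thm:2.2}(4) yields $(\beta_k\alpha((b-\delta/2)_+)-\delta/2)_+\precsim(b-(\delta/2-\varepsilon')1_A)_+\le(b-\delta/4)_+$. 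Combining with $\beta_k\alpha((a-\varepsilon/2)_+)\approx(1-p)(a-\varepsilon/2)_+(1-p)$ and $(a-\varepsilon)_+\le(a-\varepsilon/2)_+$ (and Theorem \ref{thm:2.2}(1),(4)) gives $\big((1-p)(a-\varepsilon)_+(1-p)-\delta_3\big)_+\precsim(b-\delta/4)_+$ with $\delta_3\ll\delta$ controlled.

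Finally I would assemble the pieces. Since $\|px-xp\|<2\varepsilon'$ for $x=(a-\varepsilon)_+$, we have $(a-\varepsilon)_+\approx p(a-\varepsilon)_+p+(1-p)(a-\varepsilon)_+(1-p)$ within $O(\varepsilon')$, an orthogonal sum, with $p(a-\varepsilon)_+p\le p\precsim\mathfrak a_0$; hence by Theorem \ref{thm:2.2}(1)
$$ \big((a-\varepsilon)_+-\delta_3-O(\varepsilon')\big)_+\;\precsim\;\mathfrak a_0\;\oplus\;(b-\delta/4)_+ . $$
Now the $\gamma$-part is absorbed: choosing $\mathfrak a_0\precsim d$ (the orthogonal "room'' from Theorem \ref{thm:2.2}(3) when $b$ is purely positive, and a complementary subprojection obtained via Theorem \ref{thm:2.2}(2) when $b$ is Cuntz equivalent to a projection) gives $\mathfrak a_0\oplus(b-\delta/4)_+\precsim d\oplus(b-\delta/2)_+\precsim b$, so $\big((a-\varepsilon)_+-\delta_4\big)_+\precsim b$ with $\delta_4$ controlled. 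Letting $\varepsilon'\to 0$ (whence $\delta_1,\dots,\delta_4\to 0$) gives $(a-\varepsilon)_+\precsim b$, and then $\varepsilon\to 0$ gives $a\precsim b$, so $A$ has $(n,m)$ comparison. The main obstacle, and where care is needed, is precisely this last absorption step together with the matching of the many small cutoffs as Cuntz subequivalences are transported along $\alpha$ and $\beta_k$: the device that makes everything cohere is to apply cut-transfer to the hypothesis \emph{first} (fixing $\delta$ before invoking Theorem \ref{thm:2.7}), to keep a $\delta/2$-cutoff on the $b$-side throughout so that all auxiliary errors are $\ll\delta$, and to reserve orthogonal room inside $b$ for the $\gamma$-contribution at the outset (the case distinction purely positive / Cuntz equivalent to a projection being what makes "room'' meaningful and where exactness, via $QT(A)=T(A)$, may be invoked for the latter).
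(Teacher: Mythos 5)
Your outline is sound, and essentially coincides with the paper's argument, in the case where $b$ is purely positive: there Theorem \ref{thm:2.2}(3) gives a nonzero $d$ orthogonal to $b$ with $(b-\delta/2)_++d\precsim b$, the small element in Theorem \ref{thm:2.7} can be taken to be $d$ in advance, and transporting the comparison into $B$, applying $(n,m)$ comparison there, pulling back through $\beta_k$ and absorbing $\gamma_k(1_A)\precsim d$ is exactly the paper's case (I). The genuine gap is the case where $b$ is Cuntz equivalent to a projection $q$. Then $(b-\delta/4)_+\sim q\sim b$, so cutting $b$ creates no room, and your absorption step $\mathfrak a_0\oplus(b-\delta/4)_+\precsim b$ needs a nonzero leftover piece of ${\rm Her}(b)$ after the $\beta_k\alpha$-part has been dominated. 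Your proposed source of this room, ``a complementary subprojection obtained via Theorem \ref{thm:2.2}(2)'', is not available at the outset: Theorem \ref{thm:2.2}(2) needs as input a projection which is Cuntz below $q$, not equivalent to $q$, and which already dominates the main chunk --- producing such a projection is essentially the conclusion being sought, and whatever strict gap exists between the image of $a$ and $q$ only becomes visible \emph{after} Theorem \ref{thm:2.7} has been applied, whereas the small element $\mathfrak a_0$ controlling $\gamma_k(1_A)$ must be fixed \emph{before}. The $n+m$ orthogonal elements you place inside ${\rm Her}((b-\delta)_+)$ do not help, since the main chunk may already exhaust $(b-\delta)_+\sim b$.

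The paper resolves precisely this point with two devices absent from your proposal. When $b\sim q$ but $a$ is purely positive (case II), the room is created on the $a$ side, $(a-\delta)_++d\precsim a$ with $d\perp a$, so after transport one has two orthogonal pieces $\beta_k\alpha((a-\delta)_+)$ and $\beta_k\alpha(d)$ sitting under $\beta_k\alpha(q)$; since $\gamma_k(1_A)\precsim\beta_k\alpha(d)$ cannot be arranged in the first application, the paper applies Theorem \ref{thm:2.7} a \emph{second} time, inside the corner $E=\gamma_k(1_A)A\gamma_k(1_A)$ (which is in ${\rm AT}\Omega$ by Lemma \ref{lem:2.8}), with the corner's small element taken under $\beta_k\alpha(d)$, and compares the corner's main part inside $D\in\Omega$ using the compressed inequality $\gamma_k\otimes{\rm id}(v)\,(\cdot)\,\gamma_k\otimes{\rm id}(v^*)$. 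When both $a$ and $b$ are projections (case III), even that fails, and the paper invokes stable finiteness of $A$ (via Proposition 4.2 of \cite{FL}) together with Theorem \ref{thm:2.2}(2) applied to the images $(\beta_k\alpha(p)-\cdot)_+\precsim\beta_k\alpha(q)$ to manufacture a nonzero gap element $s$, again followed by a nested application of Theorem \ref{thm:2.7} with small element $s$. Your single-application scheme, and the passing appeal to exactness and $QT(A)=T(A)$ (which the paper never uses), do not substitute for these steps, so the projection cases remain unproved in your proposal.
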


\begin{proof}
  Let $a, b\in {\rm M}_{\infty}(A)_+$. We need to show that $\langle(a-\varepsilon)_+\rangle\leq\langle b\rangle$ for any $\varepsilon>0,$  and integers
 $m, n\geq 1$ such that $n \langle a\rangle +m\langle 1_A\rangle\leq   n\langle b\rangle.$

By Lemma \ref{lem:2.8},  we may assume that
$a, b\in A_+$ and $\|a\|\leq 1,\|b\|\leq 1$.

    We divide the proof into three cases.

   $(\textbf{I})$, we suppose that  $b$ is not Cuntz equivalent to a projection.

 Given $\delta>0$, and $n,~m\geq 1$ as above, since  $n \langle a\rangle +m\langle 1_A\rangle\leq   n\langle b\rangle$, hence, there exists
  $v=(v_{i,j})\in {\rm M}_{n+m}(A),  1\leq i\leq n+m, 1\leq j\leq n+m$ such that
 $$ \|v({\rm diag}(b\otimes 1_{n},~0\otimes 1_{m})){v}^*-{\rm diag}(a\otimes 1_{n}, 1_A\otimes 1_{m})\|<\delta.$$
We may assume that $\|v\|\leq M(\delta)$.

By Theorem \ref{thm:2.2} (3),  there is a non-zero positive element $d$ orthogonal to $b$  such that  $$(b-\delta/2)_++d\precsim b.$$

 With  $F=\{a, b, v_{i,j}: 1\leq i\leq n+m, 1\leq j\leq n+m\},$  and sufficiently small $\varepsilon'>0$, since $A$ is asymptotically tracially in $\Omega$, by Theorem \ref{thm:2.7}, there exist
a ${\rm C^*}$-algebra $B$  in $\Omega$ and completely positive  contractive linear maps  $\alpha:A\to B$ and  $\beta_k: B\to A$, and $\gamma_k:A\to A\cap\beta_n(B)^{\perp}$ such that

$(1)$ the map $\alpha$ is unital  completely positive   linear map, $\beta_k(1_B)$ and $\gamma_k(1_A)$ are all projections, and  $\beta_k(1_B)+\gamma_k(1_A)=1_A$, for all $k\in \mathbb{N}$,

$(2)$ $\|x-\gamma_k(x)-\beta_k(\alpha(x))\|<\varepsilon'$ for all $x\in F$, and for all $k\in {\mathbb{N}}$,

$(3)$ $\alpha$ is an $F$-$\varepsilon'$-approximate embedding,

$(4)$ $\lim_{k\to \infty}\|\beta_k(xy)-\beta_k(x)\beta_k(y)\|=0$ and $\lim_{k\to \infty}\|\beta_k(x)\|=\|x\|$ for all $x, y\in B$, and

$(5)$ $\gamma_k(1_A)\precsim d$ for all $k\in \mathbb{N}$.

Since $ \|v({\rm diag}(b\otimes 1_{n},~0\otimes 1_{m})){v}^*-{\rm diag}(a\otimes 1_{n}, 1_A\otimes 1_{m})\|<\delta,$
we have
 $$ \|\alpha\otimes id_{M_{n+m}}(v({\rm diag}(b\otimes 1_{n},~0\otimes 1_{m})){v}^*)-{\rm diag}(\alpha(a)\otimes 1_{n}, \alpha(1_A)\otimes 1_{m})\|<\delta.$$
  By $(3)$, we have
 $$ \|\alpha\otimes id_{M_{n+m}}(v)({\rm diag}(\alpha(b)\otimes 1_{n},~0\otimes 1_{m}))\alpha\otimes id_{M_{n+m}}({v}^*)-{\rm diag}(\alpha(a)\otimes 1_{n}, \alpha(1_A)\otimes 1_{m})\|$$$$<M(\delta)(n+m)^3\varepsilon'+(n+m)^2\delta<(n+m+1)^2\delta.$$
 By $(1)$ and  by Theorem \ref{thm:2.2} (1), we have
 $$n\langle(\alpha(a)-(n+m+1)^2\delta)_+\rangle+m\langle 1_B\rangle\leq m\langle \alpha(b)\rangle.$$
 Since $B\in \Omega$, we have $$\langle(\alpha(a)-(n+m+1)^2\delta)_+\rangle\leq \langle \alpha(b)\rangle.$$
   Since $\langle(\alpha(a)-(n+m+1)^2\delta)_+\rangle\leq \langle \alpha(b)\rangle$, for sufficiently small $\bar{\delta}$,  there exists  $w\in B$ such that $$\|w\alpha(b)w^*- (\alpha(a)-(n+m+1)^2\delta)_+\|<\bar{\delta}.$$
  We may assume that $\|w\|\leq H(\bar{\delta})$.

   Since $\|w\alpha(b)w^*- (\alpha(a)-(n+m+1)^2\delta)_+\|<\bar{\delta},$  we have
   $$\|\beta_k(w\alpha(b)w^*)- \beta_k((\alpha(a)-(n+m+1)^2\delta)_+)\|<\bar{\delta}.$$
   By $(4)$ there exist sufficiently large $N$, such that  for any $k>N$, we have $$\|\beta_k(w)\beta_k(\alpha(b))\beta_k(w^*)- \beta_k((\alpha(a)-(n+m+1)^2\delta)_+)\|$$$$<H(\bar{\delta})(n+m)^3\bar{\delta}+(n+m)^2\bar{\delta}<\delta.$$
 By Theorem  2.2 (1), we have $$\langle(\beta_k\alpha(a)-(n+m+2)^2\delta)_+\rangle\leq \langle (\beta_k\alpha(b)-2\delta)_+\rangle.$$
 Therefore, we have
\begin{eqnarray}
\label{Eq:eq1}
&&\langle(a-\varepsilon)_+\rangle \nonumber\\
&&\leq\langle\gamma_k(a)\rangle+\langle (\beta_k\alpha(a)-(n+m+2)^2\delta)_+\rangle\nonumber\\
&&\leq\langle\gamma_k(1_A)\rangle+\langle (\beta_k\alpha(a)-(n+m+2)^2\delta)_+)\rangle\nonumber\\
&&\leq\langle d\rangle+\langle (\beta_k(\alpha(b))-2\delta)_+\rangle\nonumber\\
&&\leq\langle (b-\delta/2)_+\rangle
+\langle d\rangle\leq\langle b\rangle.\nonumber
\end{eqnarray}

$(\textbf{II})$, we suppose that  $b$ is Cuntz equivalent to a projection and   $a$ is not Cuntz equivalent to a projection.
Choose a projection $p$ such that $b$ is  Cuntz equivalent to $p.$
We may assume that $b=p.$

By Theorem 2.1 (3), let $\delta>0$ (with $\delta<\varepsilon$),  there exists a non-zero positive element $d$  orthogonal to $a$  such that
 $\langle(a-\delta)_++ d\rangle \leq\langle a\rangle.$
Since $n \langle a\rangle +m\langle 1_A\rangle\leq   n\langle p\rangle$, we have $n\langle(a-\delta)_+ +d\rangle +m\langle 1_A\rangle\leq n\langle p\rangle.$ Hence,  there exists  $v=(v_{i,j})\in {\rm M}_{n+m}(A), ~1\leq i\leq n+m,~ 1\leq j\leq n+m$, such that
   $$\|v{\rm diag}(p\otimes 1_{n}, ~0\otimes 1_{m}) v^*-{\rm diag}(((a-\delta)_++ d)\otimes1_{n}, 1_A\otimes 1_m) \|<\delta.$$
We may assume that $\|v\|\leq M(\delta)$.

  Since $A$ is asymptotically tracially in $\Omega$,  by Theorem \ref{thm:2.7}, for $F=\{a,  p, d, {v_{i,j}}: 1\leq i\leq n+m, 1\leq j\leq n+m\},$  and  $\varepsilon'>0$, there exist
a ${\rm C^*}$-algebra $B$  in $\Omega$ and completely positive  contractive linear maps  $\alpha:A\to B$ and  $\beta_k: B\to A$, and $\gamma_k:A\to A\cap\beta_n(B)^{\perp}$ such that

$(1)$ the map $\alpha$ is unital  completely positive  linear map, $\beta_k(1_B)$ and $\gamma_k(1_A)$ are all projections, $\beta_k(1_B)+\gamma_k(1_A)=1_A$ for all $k\in \mathbb{N}$,

$(2)$ $\|x-\gamma_k(x)-\beta_k(\alpha(x))\|<\varepsilon'$ for all $x\in F$, and for all $k\in {\mathbb{N}}$.

$(3)$ $\alpha$ is an $F$-$\varepsilon'$-approximate embedding,

$(4)$ $\lim_{k\to \infty}\|\beta_k(xy)-\beta_k(x)\beta_k(y)\|=0$ and $\lim_{k\to \infty}\|\beta_k(x)\|=\|x\|$ for all $x, y\in B$.

Since $\|v{\rm diag} (p\otimes 1_{n}, 0\otimes 1_{m}) v^*-{\rm diag}(((a-\delta)_++ d)\otimes1_{n}, 1_A\otimes 1_m)\|<\delta$,
  by $(1)$,   we have
 $$ \|\alpha\otimes id_{M_{n+m}}(v({\rm diag}(p\otimes 1_{n}, 0\otimes 1_{m})){v}^*)-{\rm diag}(\alpha((a-\delta)_++ d)\otimes1_{n}, \alpha(1_A)\otimes 1_m)\|<\delta.$$
  By $(3)$, we have
 $$ \|\alpha\otimes id_{M_{n+m}}(v)({\rm diag}(\alpha(p)\otimes 1_{n}, 0\otimes 1_{m}))\alpha\otimes id_{M_{n+m}}({v}^*)$$$$-{\rm diag}(\alpha((a-\delta)_++ d)\otimes1_{n}, \alpha(1_A)\otimes 1_m)\|$$$$<M(\delta)(n+m)^3\varepsilon'+(n+m)^2\delta<(n+m+1)^2\delta.$$
 By $(1)$ and  by Theorem \ref{thm:2.2} (1), we have
 $$n\langle(\alpha((a-\delta)_++ d)-(n+m+1)^2\delta)_+\rangle+m\langle 1_B\rangle\leq m\langle \alpha(p)\rangle.$$
 Since $B\in \Omega$, we have $$\langle(\alpha((a-\delta)_++ d)-(n+m+1)^2\delta)_+\rangle\leq \langle \alpha(p_0)\rangle.$$
   Since $\langle(\alpha((a-\delta)_++ d)-(n+m+1)^2\delta)_+\rangle\leq \langle \alpha(p)\rangle$, for sufficiently small $\bar{\delta}$, there exist $w\in B$ such that $$\|w\alpha(b)w^*- (\alpha((a-\delta)_++ d)-(n+m+1)^2\delta)_+\|<\bar{\delta}.$$
   We may assume that $\|w\|\leq H(\bar{\delta})$.

   Since $\|w\alpha(p)w^*- (\alpha((a-\delta)_++ d)-(n+m+1)^2\delta)_+\|<\bar{\delta}$,  we have
   $$\|\beta_k(w\alpha(p)w^*)- \beta_k((\alpha((a-\delta)_++ d)-(n+m+1)^2\delta)_+)\|<\bar{\delta}.$$
   By $(4)$, there exists sufficiently large $n$, for $k>n$,  one has   $$\|\beta_k(w)\beta_k\alpha(p)\beta_k(w^*)- \beta_k((\alpha((a-\delta)_++ d)-(n+m+1)^2\delta)_+)\|$$$$<H(\bar{\delta})(n+m)^3\bar{\delta}+(n+m)^2\bar{\delta}<\delta.$$
By Theorem \ref{thm:2.2} (1), we have $$\langle(\beta_k\alpha((a-\delta)_++ d)-(n+m+2)^2\delta)_+\rangle\leq \langle \beta_k\alpha(p)\rangle.$$

Since $(a-\delta)_+$ orthogonal to $d$, by $(3)$ and
$(4)$, we may assume that $\beta_k\alpha((a-\delta)_+)$ orthogonal to $\beta_k\alpha(d)$.

 For sufficiently large  integer $k$, with  $G=\{\gamma_k(a), \gamma_k(p),  \gamma_k(v_{i,j}): 1\leq i\leq n+m, 1\leq j\leq n+m\},$  and any  sufficiently small $\varepsilon''>0$, let $E=\gamma_k(1_A)A\gamma_k(1_A)$, by Lemma \ref{lem:2.8}, $E$ is  asymptotically tracially in $\Omega$, there exist
a ${\rm C^*}$-algebra $D$  in $\Omega$ and completely positive  contractive linear maps  $\alpha':E\to D$ and  $\beta_k': D\to E$, and $\gamma_k':E\to E\cap\beta_k'(D)^{\perp}$ such that

$(1)'$ the map $\alpha'$ is unital  completely positive   linear map, $\beta_k'(1_D)$ and $\gamma_k'(1_E)$ are all projections, $\beta_k'(1_D)+\gamma_k'(1_E)=1_E$ for all $k\in \mathbb{N}$,

$(2)'$ $\|x-\gamma_k'(x)-\beta_k'(\alpha'(x))\|<\varepsilon''$ for all $x\in G$, and for all $k\in {\mathbb{N}}$,

$(3)'$ $\alpha'$ is an $G$-$\varepsilon''$-approximate embedding,

$(4)'$ $\lim_{k\to \infty}\|\beta_k'(xy)-\beta_k'(x)\beta_k'(y)\|=0$ and $\lim_{k\to \infty}\|\beta_k'(x)\|=\|x\|$ for all $x, y\in D$, and

$(5)'$ $\gamma_k'(\gamma(1_A))\precsim \beta_k\alpha(d)$ for all $k\in \mathbb{N}$.

Since $\| v{\rm diag}(p\otimes 1_{n},~0\otimes 1_{m}){v}^*-{\rm diag}(((a-\delta)_++d)\otimes 1_{n}, 1_A\otimes 1_m) \|<\delta$,
by $(2)$, we have
 \begin{eqnarray}
\label{Eq:eq1}
&&\|(\gamma_k\otimes id_{M_{n+m}}(v)+\beta_k\otimes id_{M_{n+m}}(\alpha\otimes id_{M_{n+m}}(v)))({\rm diag}(\gamma_k(p)\otimes 1_n, 0\otimes 1_{m}) \nonumber\\
&&+{\rm diag}(\beta_k\alpha(p)\otimes 1_n, 0\otimes 1_{m}))(\gamma_k\otimes id_{M_{n+m}}({v}^*)+\beta_k\otimes id_{M_{n+m}}(\alpha\otimes id_{M_{n+m}}({v}^*)))\nonumber\\
&&-{\rm diag}(\gamma_k((a-\delta)_++d)\otimes 1_n,\gamma_k(1_A)\otimes 1_m)\nonumber\\
 &&-{\rm diag}(\beta_k\alpha((a-\delta)_++d)\otimes 1_k, \beta_k\alpha(1_A)\otimes 1_m)\|\nonumber\\
&&<(n+m)^2\delta+M(\delta)(n+m)^3\varepsilon'<(n+m+1)^2\delta.\nonumber
\end{eqnarray}

 Since $\beta_k(B)\perp \gamma_k(A)$, therefore, we have
 \begin{eqnarray}
\label{Eq:eq1}
&&\|(\gamma_k\otimes id_{M_{n+m}}(v)){\rm diag}(\gamma_k(p)\otimes 1_n, 0\otimes 1_{m})(\gamma_k\otimes id_{M_{n+m}}({v}^*))\nonumber\\
&&-{\rm diag}(\gamma_k((a-\delta)_++d)\otimes 1_n,\gamma_k(1_A\otimes 1_m)\|
<(n+m+1)^2\delta.\nonumber
\end{eqnarray}

Since $ \|(\gamma_k\otimes id_{M_{k}}(v))({\rm diag}(\gamma_n(p)\otimes 1_n, ~0\otimes 1_{m}))(\gamma_k\otimes id_{M_{k}}({v}^*))
-{\rm diag}(\gamma_k((a-\delta)_++d)\otimes 1_n,\gamma_n(1_A\otimes 1_m)\|
<(n+m+1)^2\delta$,
  by $(1)'$ and $(3)'$,   we have
 \begin{eqnarray}
\label{Eq:eq1}
&&\|\alpha'\otimes id_{M_{n+m}}\gamma_k\otimes id_{M_{n+m}}(v){\rm diag}(\alpha'\gamma_k(p)\otimes 1_n, ~0\otimes 1_{m}))\nonumber\\
&&\alpha'\otimes id_{M_{n+m}}\gamma_k\otimes id_{M_{n+m}}(v^*)-{\rm diag}(\alpha'\gamma_k((a-\delta)_++d)\otimes 1_n, \alpha'\gamma_n(1_A)\otimes1_m)\|\nonumber\\
&&<M(\delta)(n+m)^3\varepsilon''+(n+m+1)^2(n+m)^2\delta<(n+m+1)^4\delta.\nonumber
\end{eqnarray}
 By $(1)$ and  Theorem \ref{thm:2.2} (1),  we have
  $$n\langle(\alpha'\gamma_k((a-\delta)_++d)-(n+m+2)^4\delta)_+\rangle+m\langle 1_D\rangle\leq n\langle (\alpha'\gamma_k(p)-\delta)_+\rangle. $$
Since $D\in \Omega,$  this implies  $$\langle (\alpha'\gamma_k((a-\delta)_++d)-(n+m+2)^4\delta)_+\rangle\leq \langle (\alpha'\gamma_k(p)-\delta)_+\rangle.$$
Since $\langle (\alpha'\gamma_k((a-\delta)_++d)-(n+m+2)^4\delta)_+\rangle\leq \langle (\alpha'\gamma_k(p)-\delta)_+\rangle,$  for sufficiently small $\bar{\bar{\delta}}$,
 there exist $w\in D$ such that
$$\|w(\alpha'\gamma_k(p)-\delta)_+w^*-(\alpha'\gamma_k((a-\delta)_++d)-(n+m+2)^4\delta)_+\|
<\bar{\bar{\delta}}.$$
We may assume that $\|w\|\leq H(\bar{\bar{\delta}})$.

By $(4)'$, there exists sufficiently large $k$, such that
$$\|\beta_k'(w)(\beta_k'\alpha'\gamma_k(p)-\delta)_+\beta_k(w^*)-
(\beta_k\alpha'\gamma_k((a-\delta)_++d)-(n+m+2)^4\delta)_+\|$$$$
<H(\bar{\bar{\delta}})(n+m)^3\bar{\bar{\delta}}+(n+m)^2\bar{\bar{\delta}}<\delta.$$
 By Theorem 2.2 (1),  we have
$$\langle (\beta_k'\alpha'\gamma_k((a-\delta)_++d)-(n+m+3)^4\delta)_+\rangle\leq \langle (\beta_k'\alpha'\gamma_k(p)-\delta)_+\rangle.$$

Therefore, we have
\begin{eqnarray}
\label{Eq:eq1}
&&\langle(a-\varepsilon)_+\rangle \nonumber\\
&&\leq\langle(\gamma_k(a)-(n+m+3)^6\delta)_+\rangle+\langle (\beta_k\alpha((a-\delta)_+)-(n+m+3)^2\delta)_+\rangle\nonumber\\
 &&\leq\langle\gamma_k'\gamma_k(a)\rangle+\langle (\beta_k'\alpha'\gamma_k((a-\delta)_+)-(n+m+3)^4\delta)_+\rangle\nonumber\\
  &&+\langle (\beta_k\alpha((a-\delta)_+)-(n+m+3)^2\delta)_+\rangle\leq\langle\gamma_k'\gamma_k(1_A)\rangle \nonumber\\
&&+\langle (\beta_k\alpha((a-\delta)_+)-(n+m+3)^2\delta)_+\rangle+\langle (\beta_k'\alpha'\gamma_k((a-\delta)_+)-(n+m+3)^4\delta)_+\rangle\nonumber\\
&&\leq\langle\beta_k\alpha(d)\rangle+
  \langle (\beta_k\alpha((a-\delta)_+)-(n+m+3)^4\delta)_+\rangle\nonumber\\
&&+\langle(\beta_k'\alpha'\gamma_k((a-\delta)_+)-(n+m+3)^4\delta)_+\rangle\nonumber\\
&&\leq\langle(\beta_k'\alpha'\gamma_k(p)-\delta)_+\rangle
+\langle \beta_k\alpha(p)\rangle\leq\langle p\rangle.\nonumber
\end{eqnarray}

$(\textbf{III})$, we suppose that both $a$ and $b$ are  Cuntz equivalent to projections.

   Choose  projections $p, q$ such that $a$ is Cuntz equivalent to $p$ and  $b$  is Cuntz equivalent to $q.$
We may assume that $a=p, b=q.$ Since
 $n\langle p\rangle+m\langle 1_A\rangle\leq n\langle q\rangle$. Hence,
    there exists
  $v=(v_{i,j})\in {\rm M}_{n+m}(A), 1\leq i\leq n+m, 1\leq j\leq n+m$ such that
 $$\| v{\rm diag}(q\otimes 1_{n}, 0\otimes 1_{m}){v}^*-{\rm diag}(p\otimes 1_{n}, 1_A\otimes1_m)\|<\delta.$$
 We may assume that $\|v\|\leq M(\delta)$.

Since $A$ is asymptotically tracially in $\Omega$, by Theorem \ref{thm:2.7},  for $F=\{p, q, v_{i,j}: 1\leq i\leq n+m, 1\leq j\leq n+m\},$ for  any $\varepsilon'>0,$
  there exist
a ${\rm C^*}$-algebra $B$  in $\Omega$ and completely positive  contractive linear maps  $\alpha:A\to B$ and  $\beta_k: B\to A$, and $\gamma_k:A\to A\cap\beta_n(B)^{\perp}$ such that

$(1)$ the map $\alpha$ is unital  completely positive linear map, $\beta_k(1_B)$ and $\gamma_k(1_A)$ are all projections $\beta_k(1_B)+\gamma_k(1_A)=1_A$ for all $k\in \mathbb{N}$,

$(2)$ $\|x-\gamma_k(x)-\beta_k(\alpha(x))\|<\varepsilon'$ for all $x\in F$, and for all $k\in {\mathbb{N}}$.

$(3)$ $\alpha$ is an $F$-$\varepsilon'$-approximate embedding,

$(4)$ $\lim_{k\to \infty}\|\beta_k(xy)-\beta_k(x)\beta_k(y)\|=0$ and $\lim_{k\to \infty}\|\beta_k(x)\|=\|x\|$ for all $x, y\in B$.

Since $\| v{\rm diag}(q\otimes 1_{n},~0\otimes 1_{m}){v}^*-{\rm diag}(p\otimes 1_{n},1_A\otimes 1_m)\|<\delta$,
by $(2)$, we have
 \begin{eqnarray}
\label{Eq:eq1}
&&\|(\gamma_k\otimes id_{M_{n+m}}(v)+\beta_k\otimes id_{M_{n+m}}(\alpha\otimes id_{M_{n+m}}(v)))({\rm diag}(\gamma_k(q)\otimes 1_n, ~0\otimes 1_{m}) \nonumber\\
&&+{\rm diag}(\beta_k\alpha(q)\otimes 1_n, ~0\otimes 1_{m}))(\gamma_k\otimes id_{M_{n+m}}({v}^*)+\beta_k\otimes id_{M_{n+m}}(\alpha\otimes id_{M_{n+m}}({v}^*)))\nonumber\\
&&-{\rm diag}((\gamma_k(p)\otimes 1_n, +\gamma_k(p)\otimes 1_m)\nonumber\\
&&-{\rm diag}(\beta_k\alpha(p)\otimes 1_n),\beta_k\alpha(1_A)\otimes 1_m)\|\nonumber\\
&&<(n+m)^2\delta+M(\delta)(n+m)^3\varepsilon'<(n+m+1)^2\delta.\nonumber
\end{eqnarray}

 Since $\beta_k(B)\perp \gamma_k(A)$, therefore, we have
 \begin{eqnarray}
\label{Eq:eq1}
&&\|(\gamma_k\otimes id_{M_{n+m}}(v)){\rm diag}(\gamma_k(q)\otimes 1_n, ~0\otimes 1_{m})(\gamma_k\otimes id_{M_{n+m}}({v}^*))\nonumber\\
&&-{\rm diag}(\gamma_k(p)\otimes 1_n, \gamma_k(1_A)\otimes1_m)\|
<(n+m+1)^2\delta.\nonumber
\end{eqnarray}
Since $\| v{\rm diag}(q\otimes 1_{n},~0\otimes 1_{m}){v}^*-{\rm diag}(p\otimes 1_{n}, 1_A\otimes 1_m)\|<\delta$,
by $(1)$ and $(3)$, we have
$$\|\alpha\otimes id_{M_{n+m}}(v)){\rm diag}(\alpha(q)\otimes 1_n, ~0\otimes 1_{m})
\alpha\otimes id_{M_{n+m}}({v}^*)-{\rm diag}(\alpha(p)\otimes 1_n,\alpha(1_A)\otimes 1_m) \|
$$$$<(n+m)^2\delta+M(\delta)(n+m)^3\varepsilon'<(n+m+1)^2\delta.$$
Since $\|\alpha\otimes id_{M_{n+m}}(v)){\rm diag}(\alpha(q)\otimes 1_n, ~0\otimes 1_{m})
\alpha\otimes id_{M_{n+m}}({v}^*)-{\rm diag}(\alpha(p)\otimes 1_n,\alpha(1_A)\otimes 1_m)\|
$ $<(n+m+1)^2\delta,$
by $(1)$ and  Theorem \ref{thm:2.2} (1), we have

$$n\langle(\beta_k\alpha(p)-(n+m+2)^2\delta)_+\rangle+m\langle\beta_k(1_B)\rangle\leq n\langle\beta_k\alpha(q)\rangle,$$

$$n\langle(\alpha(p)-(n+m+1)^2\delta)_+\rangle+m\langle1_B\rangle\leq n\langle\alpha(q)\rangle.$$
Since $B\in \Omega$, we have
$$\langle(\alpha(p)-(n+m+1)^2\delta)_+\rangle\leq \langle\alpha(q)\rangle.$$
Since $\langle(\alpha(p)-(n+m+1)^2\delta)_+\rangle\leq \langle\alpha(q)\rangle,$  for sufficiently small $\bar{\delta}$,  there exists $w\in B$ such that $$\|w\alpha(q)w^*-(\alpha(p)-(n+m+1)^2\delta)_+\|<\bar{\delta}.$$
We may assume that $\|w\|\leq H(\bar{\delta})$.

By $(4)$, there exists sufficiently large $k$, such that
$$\|\beta_k(w)\beta_n\alpha(q)\beta_k(w^*)-\beta_k((\alpha(p)-(n+m+1)^2\delta)_+)\|
$$$$<H(\bar{\delta})(n+m)^3\bar{\delta}+(n+m)^2\bar{\delta}<\delta.$$
By Theorem 2.1 (1) in \cite{EFF}, we  have
$$\langle(\beta_k \alpha(p)-(n+m+2)^2\delta)_+\rangle\leq \langle \beta_k\alpha(q)\rangle.$$

$(\textbf{III.I})$, if $(\beta_k\alpha (p)-(n+m+2)^2\delta)_+$ and $\beta_k\alpha(q)$  are not Cuntz equivalent to a pure positive element,  and $(\beta_k\alpha (p)-(n+m+2)^2\delta)_+$ cuntz equivalent to $\beta_k\alpha(q)$, then there exist projections $p_1, q_1$ such that $(\beta_n\alpha (p)-(n+m+2)^2\delta)_+\sim p_1$ and $\beta_n\alpha(q)\sim q_1$,
 then we have $n\langle p_1\rangle+m\langle \beta_k\alpha(q)\rangle\leq n\langle q_1\rangle=n\langle p_1\rangle.$
Since $m\langle \beta_k\alpha(q)\rangle\neq 0$,  and this
contradicts  the stable finiteness of $A$ (since ${\rm C^*}$-algebras in $\Omega$ are stably finite (cf.~\ proposition 4.2 in \cite{FL}).
So by Theorem \ref{thm:2.2} (2), there exist a nonzero positive element $s\in A$ and orthogonal to $(\beta_k\alpha (p)-(k+2)^2\delta)_+$ such that $(\beta_k\alpha (p)-(n+m+2)^2\delta)_++s\precsim \beta_k\alpha(q)$.

For sufficiently large  integer $k$, with  $G=\{\gamma_k(p), \gamma_k(q),  \gamma_k(v_{i,j}): 1\leq i\leq n+m, 1\leq j\leq n+m\},$  and any $\varepsilon''>0$, $E=\gamma_k(1_A)A\gamma_k(1_A)$, since $E$ is  asymptotically tracially in $\Omega$, there exist
a ${\rm C^*}$-algebra $D$  in $\Omega$ and completely positive  contractive linear maps  $\alpha':E\to D$ and  $\beta_k': D\to E$, and $\gamma_k':E\to E\cap\beta_k'(D)^{\perp}$ such that

$(1)'$ the map $\alpha'$ is unital  completely positive   linear map, $\beta_k'(1_D)$ and $\gamma_k'(1_E)$ are all projections, $\beta_k'(1_D)+\gamma_k'(1_E)=1_E$ for all $k\in \mathbb{N}$,

$(2)'$ $\|x-\gamma_k'(x)-\beta_k'(\alpha'(x))\|<\varepsilon''$ for all $x\in G$ and for all $k\in {\mathbb{N}}$,

$(3)'$ $\alpha'$ is an $G$-$\varepsilon''$-approximate embedding,

$(4)'$ $\lim_{k\to \infty}\|\beta_k'(xy)-\beta_k'(x)\beta_k'(y)\|=0$ and $\lim_{k\to \infty}\|\beta_k'(x)\|=\|x\|$ for all $x,~y\in D$, and

$(5)'$ $\gamma_k'(\gamma_k(1_A))\precsim s$ for all $k\in \mathbb{N}$.

Since
$$\|(\gamma_k\otimes id_{M_{n+m}}(v)){\rm diag}(\gamma_k(q)\otimes 1_n, ~0\otimes 1_{m})(\gamma_k\otimes id_{M_{n+m}}({v}^*))$$$$
-{\rm diag}(\gamma_k(p)\otimes 1_n,\gamma_k(1_A)\otimes 1_n )\|
<(n+m+1)^2\delta.$$

By $(3)'$, we have
\begin{eqnarray}
\label{Eq:eq1}
&&\|\alpha'\otimes id_{M_{n+m}}\gamma_k\otimes id_{M_{n+m}}(v)){\rm diag}(\alpha'\gamma_k(q)\otimes 1_n, ~0\otimes 1_{m})\nonumber\\
&&\alpha'\otimes id_{M_{n+m}}\gamma_k\otimes id_{M_{n+m}}(v^*)-{\rm diag}(\alpha'\gamma_k(p)\otimes 1_n, \alpha'\gamma_k(1_A)\otimes 1_n)\|\nonumber\\
&&<M(\delta)(n+m)^3\varepsilon''+(n+m)^2(n+m+1)^2\delta<(n+m+1)^4\delta.\nonumber
\end{eqnarray}
By Theorem \ref{thm:2.2} (1), we  have
$$n\langle(\alpha'\gamma_k(p)-(n+m+2)^4\delta)_+\rangle+m\langle1_D\rangle\leq n\langle (\alpha'\gamma_k(q)-\delta)_+
\rangle. $$
Since $D\in \Omega$, we have
$$\langle(\alpha'\gamma_k(p)-(n+m+2)^4\delta)_+\rangle\leq \langle (\alpha'\gamma_k(q)-\delta)_+
\rangle. $$
For sufficiently small $\bar{\bar{\delta}}$, there exists $w\in D$ such that
$$\|w(\alpha'\gamma_k(q)-\delta)_+w^*-(\alpha'\gamma_k(p)-(n+m+2)^4\delta)_+\|
<\bar{\bar{\delta}}.$$
We may assume that $\|w\|\leq H(\bar{\bar{\delta}})$.

By $(4')$, there exist sufficiently large $n$, such that
$$\|\beta_k'(w)\beta_n'(\alpha'\gamma_k(q)-\delta)_+\beta_k(w^*)
-\beta_k((\alpha'\gamma_k(p)-(n+m+2)^4\delta)_+)\|
$$$$<H(\bar{\bar{\delta}})(n+m)^3\bar{\bar{\delta}}+(n+m)^2\bar{\bar{\delta}}<\delta.$$
We  have   $$\langle (\beta_k'\alpha'\gamma_k(p)-(n+m+3)^4\delta)_+\rangle\leq \langle (\beta_k'\alpha'\gamma_k(q)-\delta)_+\rangle.$$

Therefore, if $\varepsilon'$, are small enough, then
\begin{eqnarray}
\label{Eq:eq1}
&&\langle (p-\varepsilon)_+\rangle \nonumber\\
&&\leq\langle\gamma_k(p)-(n+m+3)^6\delta)_+\rangle+\langle (\beta_k\alpha(p)-(n+m+3)^2\delta)_+\rangle\nonumber\\
 &&\leq\langle(\gamma_k'\gamma_n(p)-4\delta)_+\rangle+\langle (\beta_k'\alpha'\gamma_k(p)-(n+m+3)^4\delta)_+\rangle\nonumber\\
  &&+\langle (\beta_k\alpha(p)-(n+m+3)^2\delta)_+\rangle\leq\langle\gamma_k'\gamma_n(1_A)\rangle+
  \langle (\beta_k\alpha(p)-(n+m+3)^2\delta)_+\rangle \nonumber\\
&&+\langle (\beta_k'\alpha'\gamma_k(p)-(n+m+3)^4\delta)_+\rangle\nonumber\\
&&\leq\langle s\rangle+
  \langle (\beta_k\alpha(p)-(n+m+2)^2\delta)_+\rangle+\langle (\beta_k'\alpha'\gamma_k(p)-\delta)_+\rangle\nonumber\\
&&\leq\beta_k\alpha(q)+(\beta_k'\alpha'\gamma_k(q)-\delta)_+ \leq \langle q\rangle.\nonumber
\end{eqnarray}

$(\textbf{III.II})$, We suppose that $(\beta_k\alpha (p)-(n+m+2)^2\delta)_+$ is a purely positive element, then, by Theorem \ref{thm:2.2} (3),   there is a non-zero positive element $d$ orthogonal to $(\beta_k\alpha (p)-(n+m+2)^2\delta)_+$  such that  $(\beta_k\alpha (p)-(n+m+3)^2\delta)_++d\precsim (\beta_k\alpha (p)-(n+m+2)^2\delta)_+$.

With the same argument as $(\textbf{III.I})$, for sufficiently large integer $k$, with  $G=\{\gamma_k(p), \gamma_k(q),  \gamma_k(v_{i,j}): 1\leq i\leq n+m, 1\leq j\leq n+m\},$  and  $\varepsilon''>0$, $E=\gamma_k(1_A)A\gamma_k(1_A)$, since $E$ is  asymptotically tracially in $\Omega$, there exist
a ${\rm C^*}$-algebra $D$  in $\Omega$ and completely positive  contractive linear maps  $\alpha':E\to D$ and  $\beta_k': D\to E$, and $\gamma_k':E\to E\cap\beta_k'(D)^{\perp}$ such that
$$\langle (\beta_k'\alpha'\gamma_k(p)-(n+m+3)^4\delta)_+\rangle\leq \langle (\beta_k'\alpha'\gamma_k(q)-\delta)_+\rangle.$$

Therefore, if $\varepsilon'$, are small enough, then
\begin{eqnarray}
\label{Eq:eq1}
&&\langle (p-\varepsilon)_+\rangle \nonumber\\
&&\leq\langle(\gamma_k(p)-(n+m+3)^6\delta)_+\rangle+\langle (\beta_k\alpha(p)-(n+m+3)^2\delta)_+\rangle\nonumber\\
 &&\leq\langle(\gamma_k'\gamma_k(p)-4\delta)_+\rangle+\langle (\beta_k'\alpha'\gamma_k(p)-(n+m+3)^4\delta)_+\rangle\nonumber\\
  &&+\langle (\beta_k\alpha(p)-(n+m+3)^2\delta)_+\rangle\leq\langle\gamma_k'\gamma_n(1_A)\rangle+
  \langle (\beta_k\alpha(p)-(n+m+3)^2\delta)_+\rangle \nonumber\\
&&+\langle (\beta_k'\alpha'\gamma_k(p)-(n+m+3)^4\delta)_+\rangle\nonumber\\
&&\leq\langle s\rangle+
  \langle (\beta_k\alpha(p)-2\delta)_+\rangle+\langle (\beta_k'\alpha'\gamma_k(p)-2\delta)_+\rangle\nonumber\\
&&\leq\langle \beta_k\alpha(q)\rangle+\langle (\beta_k'\alpha'\gamma_k(q)-\delta)_+\rangle \leq \langle q\rangle.\nonumber
\end{eqnarray}

$(\textbf{III.III})$, we suppose that   $(\beta_k\alpha (p)-(n+m+2)^2\delta)_+$ is Cuntz equivalent to a projection and   $\beta_k\alpha(q)$ is not Cuntz equivalent to a projection.

By  Theorem \ref{thm:2.2} (3),  there exists a non-zero positive element $d$  orthogonal to $\beta_k\alpha(q)$ such that
 $\langle(\beta_k\alpha (q)-\delta)_++ d\rangle \leq\langle\beta_k\alpha(q)\rangle.$

 With the same argument as $(\textbf{III.I})$, for sufficiently large integer $k$, with  $G=\{\gamma_k(p),~ \gamma_k(q),~ \gamma_k(v_{i,j}): ~1\leq i\leq n+m,~ 1\leq j\leq n+m\},$  and any sufficiently small $\varepsilon''>0$,  $E=\gamma_k(1_A)A\gamma_k(1_A)$, since $E$ is  asymptotically tracially in $\Omega$, there exist
a ${\rm C^*}$-algebra $D$  in $\Omega$ and completely positive  contractive linear maps  $\alpha':E\to D$ and  $\beta_k': D\to E$, and $\gamma_k':E\to E\cap\beta_k'(D)^{\perp}$,   such that
$$\langle (\beta_k'\alpha'\gamma_k(p)-(n+m+3)^4\delta)_+\rangle\leq \langle (\beta_k'\alpha'\gamma_k(q)-\delta)_+\rangle.$$

Therefore, we have
\begin{eqnarray}
\label{Eq:eq1}
&&\langle (p-\varepsilon)_+\rangle \nonumber\\
&&\leq\langle\gamma_k'(p)-(n+m+3)^6\delta)_+\rangle+\langle (\beta_k\alpha(p)-(n+m+3)^2\delta)_+\rangle\nonumber\\
 &&\leq\langle(\gamma_n'\gamma_k(p)-4\delta)_+\rangle+\langle (\beta_k'\alpha'\gamma_k(p)-(n+m+3)^4\delta)_+\rangle\nonumber\\
  &&+\langle (\beta_k\alpha(p)-\delta)_+\rangle\leq\langle\gamma_k'\gamma_n(1_A)\rangle+
  \langle (\beta_k\alpha(p)-\delta)_+\rangle \nonumber\\
&&+\langle (\beta_k'\alpha'\gamma_k(p)-(n+m+3)^4\delta)_+\rangle\nonumber\\
&&\leq\langle s\rangle+
  \langle (\beta_k\alpha(p)-\delta)_+\rangle+\langle (\beta_k'\alpha'\gamma_k(p)-(n+m+3)^4\delta)_+\rangle\nonumber\\
&&\leq \langle \beta_k\alpha(q)\rangle+\langle (\beta_k'\alpha'\gamma_k(q)-\delta)_+\rangle\leq \langle q\rangle.\nonumber
\end{eqnarray}
\end{proof}

\begin{corollary}\label{cor:3.4}
Let $\Omega$ be a class of stably finite exact  unital
${\rm C^*}$-algebras which have the radius of comparison $n$ (with $n\neq 0$). Let $A$ be a unital separable simple ${\rm C^*}$-algebra. If  $A$ is asymptotically tracially in $\Omega$, then ${\rm rc}(A)\leq n$.

\end{corollary}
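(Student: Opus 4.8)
The plan is to deduce ${\rm rc}(A)\le n$ directly from Theorem~\ref{thm:3.6} by approximating $n$ from above by rational ratios $m'/n'$.

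First I would fix positive integers $n',m'$ with $m'/n'>n$; since ${\rm rc}(B)=n\neq 0$ forces $n>0$, the ratio $m'/n'$ is positive, so $m'\neq 0$. The point is that every $B\in\Omega$ then has $(n',m')$ comparison. Indeed, by Definition~\ref{def:2.1} the number ${\rm rc}(B)$ is the infimum of the ratios $\ell/k$ for which the implication ``$ka\oplus\ell 1_B\lesssim kb\Rightarrow a\lesssim b$'' holds in $B$, and this collection of ratios is upward closed (once a pair $(k,\ell)$ works, increasing $\ell$ only weakens the hypothesis, since $ka\oplus\ell 1_B\lesssim ka\oplus\ell'1_B$ for $\ell'\ge\ell$, and together with the usual subdivision argument for the radius of comparison one passes to any larger ratio). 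Hence ${\rm rc}(B)=n<m'/n'$ gives that $B$ has $(n',m')$ comparison. Consequently $\Omega$ is a class of stably finite exact unital ${\rm C^*}$-algebras all of which have $(n',m')$ comparison with $m'\neq 0$ — exactly the hypothesis of Theorem~\ref{thm:3.6} with the pair $(n',m')$ in place of $(n,m)$.

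Next I would apply Theorem~\ref{thm:3.6}: since $A$ is a unital separable simple ${\rm C^*}$-algebra which is asymptotically tracially in $\Omega$, we conclude that $A$ has $(n',m')$ comparison, that is, $n'a\oplus m'1_A\lesssim n'b$ implies $a\lesssim b$ for all $a,b\in{\rm M}_\infty(A)_+$. By Definition~\ref{def:2.1} this puts $m'/n'$ into the set over which the infimum defining ${\rm rc}(A)$ is taken, so ${\rm rc}(A)\le m'/n'$. Letting $m'/n'$ range over all rationals strictly greater than $n$ and passing to the infimum yields ${\rm rc}(A)\le\inf\{m'/n':m'/n'>n\}=n$, which is the assertion.

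I expect the only delicate point to be the first step, namely the passage from the numerical value ${\rm rc}(B)=n$ to $(n',m')$ comparison for the concrete pair $(n',m')$, since the infimum in Definition~\ref{def:2.1} need not be attained; this is handled by the standard monotonicity (upward-closedness) of the radius of comparison, for which I would include a short verification using $a\precsim a\oplus c$ and Theorem~\ref{thm:2.2}. Everything after that is a routine unwinding of Definition~\ref{def:2.1} together with a single invocation of Theorem~\ref{thm:3.6}.
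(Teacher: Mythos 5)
Your overall plan --- approximate $n$ from above by rational ratios, feed Theorem~\ref{thm:3.6} a fixed pair $(n',m')$, and pass to the infimum --- is exactly how the paper intends the corollary to follow (its proof is nothing more than the citation of Theorem~\ref{thm:3.6}). The trouble is the step you flag and then wave through: that ${\rm rc}(B)=n$ yields $(n',m')$ comparison for \emph{every} pair with $m'/n'>n$. With Definition~\ref{def:2.1} the set of ``good'' ratios is not known to be upward closed. What is elementary is: (i) if $B$ has $(k,\ell)$ comparison then it has $(k,\ell')$ comparison for all $\ell'\ge\ell$ (your $a\precsim a\oplus c$ remark), and (ii) it has $(k',\ell')$ comparison whenever $k'$ divides $k$ and $\ell'/k'\ge\ell/k$ (amplify the hypothesis by the factor $k/k'$). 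But for an arbitrary target pair $(n',m')$ with $m'/n'\ge\ell/k$, starting from $n'\langle a\rangle+m'\langle 1_B\rangle\le n'\langle b\rangle$ the natural manipulation only produces $n'\bigl(k\langle a\rangle+\ell\langle 1_B\rangle\bigr)\le n'\bigl(k\langle b\rangle\bigr)$, and to cancel the factor $n'$ you would need exactly the sort of unperforation/cancellation in the Cuntz semigroup that one cannot assume here; there is no ``usual subdivision argument'' that converts $(k,\ell)$ comparison into $(n',m')$ comparison for an unrelated denominator $n'$.

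There is a second, related uniformity issue: since the infimum defining ${\rm rc}(B)$ need not be attained, the pairs witnessing ratios close to $n$ have denominators $k_B$ depending on $B$, whereas Theorem~\ref{thm:3.6} requires one fixed pair $(n',m')$ valid for every algebra in $\Omega$ (you cannot control which $B\in\Omega$ appears in the asymptotic tracial approximation). The only pair extractable uniformly from (i)--(ii) has denominator dividing all the $k_B$, in practice $(1,\lceil n\rceil+1)$, which via Theorem~\ref{thm:3.6} gives only ${\rm rc}(A)\le n+1$, weaker than the assertion. To close the argument you would need either to strengthen the hypothesis to ``each $B\in\Omega$ has $(n',m')$ comparison for every $m'/n'>n$,'' or to pass to a state/quasitrace formulation of the radius of comparison (where monotonicity in the radius is automatic, using exactness and stable finiteness) and carefully transfer between that formulation and the algebraic pairs of Definition~\ref{def:2.1}; neither is carried out in your proposal (nor, it should be said, in the paper's one-line proof).
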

\begin{proof}By Theorem \ref{thm:3.6}.

\end{proof}

 \end{document}